\patchcmd{\@maketitle}{\huge}{\Large}{}{}
\patchcmd{\abstract}{\quotation}{}{}{}
\newcommand{\shorttitle}{\@title}
\def\AMS#1{\par\noindent \textbf{AMS subject classification: }#1\par}
\newcommand{\acknowledgements}{\par\mbox{}\par\noindent\textbf{Acknowledgements: }}
\newcommand{\keywords}[1]{\par\noindent\textbf{Keywords: }#1}
\theoremstyle{plain}
\newtheorem{theorem}{Theorem}
\newtheorem{lemma}{Lemma}
\newtheorem{definition}{Definition}
\newtheorem{remark}{Remark}
\renewenvironment{abstract}{\bigskip\noindent\begin{minipage}{\textwidth}\setlength{\parindent}{15pt}\paragraph{Abstract:}}{\end{minipage}}
\definecolor{vs-darkm-black}{HTML}{1F1F1F}
\definecolor{vs-darkm-white}{HTML}{D0D0D0}
\begin{document}

\title{Moment Estimator-Based Extreme Quantile Estimation with Erroneous
Observations: \\ Application to Elliptical Extreme Quantile Region Estimation}

\author[1]{Jaakko Pere\thanks{Corresponding author: jaakko.pere@aalto.com}}
\author[1]{Pauliina Ilmonen}
\author[2]{Lauri Viitasaari}
\affil[1]{Aalto University School of Science, Finland}
\affil[2]{Aalto University School of Business, Finland}

\maketitle

\begin{abstract}
    In many application areas of extreme value theory, the variables of interest
	are not directly observable but instead contain errors. In this article, we
	quantify the effect of these errors in moment-based extreme value index
	estimation, and in corresponding extreme quantile estimation. We consider
	all, short-, light-, and heavy-tailed distributions. In particular, we
	derive conditions under which the error is asymptotically negligible. As an
	application, we consider affine equivariant extreme quantile region
	estimation under multivariate elliptical distributions. 
\end{abstract}

\keywords{Approximation error, Moment estimator, Extreme quantile estimation,
Elliptical distribution}

\smallskip

\AMS{62G32, 62H12}


\section{Introduction}
\label{sec:intro}

In extreme value theory it is customary to require that a normalized sample
maximum converges weakly to a limit distribution characterized, up to a location
and scale, with one parameter $\gamma\in\mathbb{R}$, called the extreme value
index. This allows to make inference about the tail of the distribution. For
further reading on extreme value theory and its applications,
see \cite{dehaan2007}.

When extreme behavior is considered in multivariate or infinite dimensional
settings, the problem is often reduced to univariate case. For example, under
multivariate elliptical distributions, the tail behavior is dictated by a
univariate variable, called the generating variate. Similarly, in functional
data setting, one may consider the extreme behavior of functionals such as
norms. In the elliptical case, one does not observe the generating variates
directly. On the other hand, approximated generating variates can be obtained by
plugging in estimators of location and scatter.

In this article, we quantify the effect of approximation errors in moment-based
estimation of the extreme value index $\gamma\in\mathbb{R}$, and in estimation
of the corresponding extreme quantiles. As an application, we consider affine
equivariant extreme quantile estimation under multivariate elliptical
distributions. On related work, we cite
\cite{cai2011,he2016,hult2002,kim2019,pere2024a,virta2024}. In \cite{cai2011},
the authors studied density-function based risk region estimation under
multivariate regular variation, whereas, in \cite{he2016}, the authors derived
depth-based extreme quantile regions under multivariate regular variation. In
\cite{hult2002}, the authors connected the extreme behavior of multivariate
elliptical distributions to the extreme behavior of the corresponding generating
variate. This connection was later applied in \cite{pere2024a}, where the
authors assessed the effect of approximation errors under heavy-tailed
elliptical distributions. In \cite{kim2019}, the authors considered
approximation errors in assessing the extreme behavior of PCA scores, again,
under heavy-tailed distributions. In \cite{virta2024}, the authors studied the
effect of approximation errors on the estimation of extreme value indices of
latent variables. We note that in most of these articles, results are derived
only under heavy-tailed distributions. Our work compliments the aforementioned
literature by considering short-, light-, and heavy-tailed distributions.

Our main result, Theorem~\ref{theorem:moment}, provides error bounds related to
extreme quantile estimation under approximated observations. Our result
highlights that extreme quantile estimation is very sensitive to approximation
errors, particularly in the case $\gamma\leq 0$, when decaying approximation
error does not automatically guarantee decay of the estimation error. On the
other hand, if the approximation error decays rapidly enough, then one obtains
standard convergence of the estimation error. Sufficient conditions for the rate
of decay of the approximation error are provided in
Lemma~\ref{lemma:error-behavior}. Theorem~\ref{theorem:moment} is applied in the
context of elliptical distributions. 

The rest of the article is organized as follows. We begin by presenting our
notations and by reviewing necessary preliminaries on univariate extreme value
theory in Section~\ref{sec:preliminaries}. Our main results are provided and
discussed in Section \ref{sec:general-error}. In Section~\ref{sec:elliptical},
we apply the results for affine equivariant extreme quantile region estimation
under multivariate elliptical distributions.   Proofs and technical lemmas are
postponed to the \hyperref[appendix]{Appendix}.

\section{Notations and Preliminaries}
\label{sec:preliminaries}

In this section, we present our notations and review necessary preliminaries on
univariate extreme value theory. For a general overview of the topic, we refer
to the textbook~\cite{dehaan2007}.

Let $Y$ be a random variable with a distribution $F_Y$. We use the notation
$U_Y(t) = F_Y^{\leftarrow}\left(1 - \frac{1}{t}\right)$ for $t > 1$, where
$g^\leftarrow(x) = \inf\left\{y\in\mathbb{R} : g(y)\geq x \right\}$ is the
left-continuous inverse of a nondecreasing function $g$. Note that the right
endpoint of the distribution $F_Y$ is given by  $U_Y\left(\infty\right) =
\lim_{t\to\infty} U_Y(t) = \sup\left\{x\in \mathbb{R} : F_Y(x) < 1\right\}$.
Whenever it is clear from the context, we omit the subscript and simply write
$F$, $U$, and $f=f_Y$ for the density (provided that it exists). Let $\bm Y =
\left(Y_1, \ldots, Y_n\right)$ be an i.i.d.\ sample of size $n$ from $F$, and
let $ \bm Y_{1,n}\leq  \bm Y_{2, n}\leq \cdots \leq  \bm Y_{n,n}$ be the
corresponding order statistics. Notations $\stackrel{\mathcal{D}}{=}$,
$\stackrel{\mathcal{D}}{\to}$, and $\stackrel{\mathbb{P}}{\to}$ denote equality
in distribution, convergence in distribution, and convergence in probability,
respectively. For two sequences of random variables $X_n$ and $Y_n$ we denote
$X_n = O_\mathbb{P}\left(Y_n\right)$ if there exists another uniformly tight
sequence of random variables $Z_n$ such that $X_n = Y_n Z_n$. Similarly, we
denote $X_n = o_\mathbb{P}\left(Y_n\right)$ if there exists a sequence of random
variables $Z_n$ converging to zero in probability such that $X_n = Y_n Z_n$. For
limits, if not specified explicitly, we assume $n\to\infty$. 

The distribution $F$ is said to be in the maximum domain of attraction of a
nondegenerate distribution $G$, denoted by $F\in\mathcal{D}\left(G\right)$, if
there exist sequences $a_n > 0$ and $b_n\in\mathbb{R}$ such that 
\begin{equation*}
	\frac{\bm Y_{n,n} - b_n}{a_n}\stackrel{\mathcal{D}}{\to} G.
\end{equation*}
The possible limit distributions $G$ are of the type
\begin{equation*}
	G_\gamma\left(x\right) =
	\begin{cases}
		\exp\left(-\left(1 + \gamma x\right)^{-1/\gamma}\right),\
		1 + \gamma x > 0 \quad &\textnormal{if}\ \gamma\neq 0, \\
		\exp\left(-e^{-x}\right),\
		x\in\mathbb{R} \quad &\textnormal{if}\ \gamma = 0.
	\end{cases}
\end{equation*}
The parameter $\gamma$ is called the extreme value index, and it characterizes
the heaviness of the tail of a distribution
$F\in\mathcal{D}\left(G_\gamma\right)$.

The function $U$ belongs to the class of extended regularly varying functions
with the extreme value index $\gamma\in\mathbb{R}$, and we denote $U\in
ERV_\gamma$, if 
\begin{equation} \label{eq:erv}
	\lim_{t\to\infty} \frac{U(tx) - U(t)}{a(t)} =
	\frac{x^\gamma - 1}{\gamma}	\quad \forall \, x > 0,
\end{equation}
where $a(t)=a_Y(t)$ is a suitable scaling function and for $\gamma = 0$ we
interpret $\frac{x^\gamma - 1}{\gamma} = \ln x$. Since we have
$F\in\mathcal{D}\left(G_\gamma\right)$ if and only if $U\in ERV_\gamma$, the
condition $U\in ERV_\gamma$ is convenient for constructing extreme quantile
estimators. Let $k<n$ denote a positive integer and suppose that we are
interested in estimating $(1-p)$-quantile $U\left(\frac{1}{p}\right)$, where $p$
is small compared to $k/n$. Then the $\left(1 - p\right)$-quantile is related to
the smaller $\left(1 - k/n\right)$-quantile by
\begin{equation*} 
	U\left(\frac{1}{p}\right) \approx U\left(\frac{n}{k}\right)
	+ a\left(\frac{n}{k}\right) \frac{\left(\frac{k}{np}\right)^\gamma - 1}
	{\gamma}.
\end{equation*}
The role of the integer $k$ is to control the number of tail observations used
in the estimation, and for asymptotic analysis, a standard assumption is 
\begin{enumerate}[start=1, label={(A\arabic*)}]
	\item $k = k_n\to\infty$, $k/n\to 0$, as $n\to\infty$. \label{item:k}
\end{enumerate}
Moreover, in asymptotics related to extreme quantile estimation, one usually
assumes that $p = p_n\to 0$ fast. See Remark~\ref{remark:p} for details. 

To estimate an extreme quantile $U\left(1/p\right)$ corresponding to a small
$p$, we have to estimate the different components $U\left(n/k\right)$,
$a\left(n/k\right)$ and $\gamma$. To cover all the cases $\gamma\in \mathbb{R}$,
we use the moment-based estimator. Set, for  $\ell\in\{1, 2\}$,
\begin{equation*}
	M_n^{(\ell)}\left(\bm Y\right) = \frac{1}{k}\sum_{j=0}^{k-1}
	\left(\ln \bm Y_{n-j, n} - \ln \bm Y_{n-k, n}\right)^\ell,
\end{equation*}
and
\begin{equation*}
	\hat\gamma_+\left(\bm Y\right) = M_n^{(1)}\left(\bm Y\right)
	\quad\textnormal{and}\quad
	\hat\gamma_-\left(\bm Y\right) = 1
	- \frac{1}{2}\left(1 - \frac{\left(M_n^{(1)}\left(\bm Y\right)\right)^2}
	{M_n^{(2)}\left(\bm Y\right)}\right)^{-1}.
\end{equation*}
It is known that $\hat\gamma_+\left(\bm Y\right)\stackrel{\mathbb{P}}{\to}
\gamma_+ =\max\{0, \gamma\}$ and $\hat\gamma_-\left(\bm
Y\right)\stackrel{\mathbb{P}}{\to} \gamma_- =\min\{0, \gamma\}$ provided that
$U\left(\infty\right) > 0$, $U\in ERV_\gamma$, and Assumption~\ref{item:k} holds
(see, e.g., \cite[Lemma 3.5.1]{dehaan2007}). Note that $\hat\gamma_+\left(\bm
Y\right)$ gives the well-studied Hill estimator~\cite{hill1975} that is often
used in the case $\gamma>0$. The moment estimator, introduced
in~\cite{dekkers1989}, is given by 
\begin{equation*}
	\hat\gamma_M\left(\bm Y\right) = \hat\gamma_+\left(\bm Y\right)
	+ \hat\gamma_-\left(\bm Y\right).
\end{equation*}
The corresponding extreme quantile estimator is defined as 
\begin{equation} \label{eq:q-uni-est}
	\hat x_p\left(\bm Y\right) = \bm Y_{n-k,n}
	+ \hat \sigma_M\left(\bm Y\right) \frac{\left(\frac{k}{np}\right)
		^{\hat\gamma_M\left(\bm Y\right)} - 1}{\hat \gamma_M\left(\bm Y\right)},
\end{equation}
where $\hat\sigma_M\left(\bm Y\right) = \bm Y_{n-k, n} M_n^{(1)}\left(\bm
Y\right)\left(1 - \hat\gamma_-\left(\bm Y\right)\right)$~\cite{dekkers1989}.

For asymptotic normality results of $\bm Y_{n-k,n}$, $\hat\gamma_M\left(\bm
Y\right)$, $\hat\sigma_M\left(\bm Y\right)$, and $\hat x_p\left(\bm Y\right)$,
the condition $U\in ERV_\gamma$ is not sufficient but a second-order condition
is required. The function $U$ satisfies the second-order extended regular
variation condition with $\gamma\in\mathbb{R}$ and $\rho \leq 0$  if for some
function $A$, that has constant sign and $\lim_{t\to\infty}A\left(t\right) = 0$,
we have
\begin{equation} \label{eq:2erv}
	\lim_{t\to\infty} \frac{\frac{U\left(tx\right) - U\left(t\right)}
		{a\left(t\right)} - \frac{x^\gamma - 1}{\gamma}}{A(t)}
	= \frac{1}{\rho}\left(\frac{x^{\gamma + \rho} - 1}{\gamma + \rho}
	- \frac{x^\gamma - 1}{\gamma}\right)
	\eqqcolon H_{\gamma, \rho}\left(x\right) \quad\forall \, x > 0,
\end{equation}
where for $\gamma = 0$ or $\rho = 0$, the right-hand side is interpreted as the
limit of $H_{\gamma, \rho}\left(x\right)$ as $\gamma\to 0$ or $\rho\to 0$. If
$U$ satisfies the second-order extended regular variation condition, we write
$U\in 2ERV_{\gamma, \rho}$. The second-order condition gives the rate of
convergence in~\eqref{eq:erv}. For the asymptotic normality of $\bm Y_{n-k,n}$,
the condition $U\in 2ERV_{\gamma, \rho}$ with $\gamma\in\mathbb{R}$ and
$\rho\leq 0$ is sufficient provided that $k$ is chosen such that
Assumption~\ref{item:k} holds and $\lim_{n\to\infty}\sqrt{k} A\left(n/k\right)$
exists and is finite, see~\cite[Theorem 2.4.1]{dehaan2007}. On the other hand,
for asymptotic normality of $\hat\gamma_M\left(\bm Y\right)$ and
$\hat\sigma_M\left(\bm Y\right)$, second-order condition for the function $\ln
U$ is necessary, see~\cite[Theorem 3.5.4 and Theorem 4.2.1]{dehaan2007}. The
following result connects second-order conditions of $U$ and $\ln U$.
\begin{lemma}[Lemma B.3.16, \cite{dehaan2007}] \label{lemma:log-second} Suppose
	that $U\in 2ERV_{\gamma, \rho}$ with $U\left(\infty\right) \in (0, \infty]$
	and suppose that $\gamma \neq \rho$. Then
	\begin{equation*}
		\lim_{t\to\infty} \frac{\frac{a\left(t\right)}{U\left(t\right)}
		- \gamma_+}{A\left(t\right)} =
		\begin{cases}
			0, & \gamma < \rho \leq 0, \\
			\pm\infty,
			& \rho < \gamma \leq 0 \ \textnormal{or}\
			\left(0 < \gamma < -\rho \ \textnormal{and}\ l \neq 0\right)
			\ \textnormal{or}\ \gamma = -\rho, \\
			\frac{\gamma}{\gamma + \rho},
			& \left(0 < \gamma < -\rho \ \textnormal{and}\ l = 0\right)
			\ \textnormal{or}\ \gamma > -\rho \geq 0,
		\end{cases}
	\end{equation*}
	where $l = \lim_{t\to\infty}
	U\left(t\right) - a\left(t\right) / \gamma$ for $\gamma>0$. 
	Furthermore, if $\gamma > 0$ and $\rho < 0$, we have
	\begin{equation} \label{eq:2erv-log}
		\lim_{t\to\infty} \frac{\frac{\ln U\left(tx\right)
		- \ln U\left(t\right)}{a\left(t\right) / U\left(t\right)}
		- \frac{x^{\gamma_-} - 1}{\gamma_-}}{Q\left(t\right)}
		= H_{\gamma_-, \rho'}\left(x\right),
	\end{equation}
	where
	\begin{equation} \label{eq:Q}
		Q\left(t\right) =
		\begin{cases}
			A\left(t\right), & \gamma < \rho \leq 0, \\
			\gamma_+ - \frac{a\left(t\right)}{U\left(t\right)},
			& \rho < \gamma \leq 0 \ \textnormal{or}\
			\left(0 < \gamma < -\rho \ \textnormal{and}\ l \neq 0\right)
			\ \textnormal{or}\ \gamma = -\rho, \\
			\frac{\rho}{\gamma + \rho} A\left(t\right),
			& \left(0 < \gamma < -\rho \ \textnormal{and}\ l = 0\right)
			\ \textnormal{or}\ \gamma > -\rho > 0,
		\end{cases}
	\end{equation}
	and
	\begin{equation*}
		\rho' =
		\begin{cases}
			\rho, & \gamma < \rho \leq 0, \\
			\gamma, & \rho < \gamma \leq 0, \\
			-\gamma, & \left(0 < \gamma < -\rho
			\ \textnormal{and}\ l \neq 0\right), \\
			\rho, & \left(0 < \gamma < -\rho \ \textnormal{and}\ l = 0\right)
			\ \textnormal{or}\ \gamma \geq -\rho > 0.
		\end{cases}
	\end{equation*}
	Finally, if $\gamma > 0$ and $\rho = 0$, the limit in~\eqref{eq:2erv-log}
	equals zero for any $Q(t)$ satisfying $A(t) = O\left(Q(t)\right)$.
\end{lemma}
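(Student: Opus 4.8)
The plan is to reduce everything to an expansion of $\ln U(tx)-\ln U(t)=\ln\bigl(1+b(t)D_t(x)\bigr)$, where $b(t)\coloneqq a(t)/U(t)$ and $D_t(x)\coloneqq\bigl(U(tx)-U(t)\bigr)/a(t)$, combined with a careful accounting of the rate at which $b(t)\to\gamma_+$; this is the classical route behind Lemma~B.3.16 of \cite{dehaan2007}.

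First I would settle the first display, i.e., the behaviour of $b(t)-\gamma_+$ relative to $A(t)$. For $\gamma\le 0$ this is short. If $U(\infty)<\infty$, then $U(t)\to U(\infty)\in(0,\infty)$, so $b(t)$ is asymptotic to a constant multiple of $a(t)\in RV_\gamma$; comparing with $A\in RV_\rho$ and using $\gamma\neq\rho$ gives $b(t)=o\bigl(A(t)\bigr)$ when $\gamma<\rho$ and $b(t)/A(t)\to\pm\infty$ (the sign being that of $A$) when $\rho<\gamma$. If $U(\infty)=\infty$, then necessarily $\gamma=0$ and hence $\rho<0$; here $a,U\in RV_0$, it is classical that $b(t)\to 0$, and $b(t)/A(t)\to\pm\infty$. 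For $\gamma>0$ one has $U\in RV_\gamma$ and $b(t)\to\gamma$, and the key identity is $U(t)-a(t)/\gamma=-\gamma^{-1}U(t)\bigl(b(t)-\gamma\bigr)$: a standard second-order computation shows that $U(t)-a(t)/\gamma$ converges to a finite limit $l$ precisely when $\gamma<-\rho$, diverges at rate $t^{\gamma+\rho}$ when $\gamma>-\rho$, and diverges logarithmically when $\gamma=-\rho$. Comparing the three resulting possible orders of $b(t)-\gamma$ --- namely $t^{-\gamma}$ (from a nonzero $l$), $t^{\rho}$ (from the genuine second-order term), and the resonant $t^{-\gamma}\ln t$ --- produces the stated trichotomy, and in the non-degenerate cases a short computation pins the limit down to $\gamma/(\gamma+\rho)$.

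For the second-order statement \eqref{eq:2erv-log} (asserted for $\gamma>0$) I would invoke the uniform second-order (Potter-type) bounds for $2ERV$ functions to write $D_t(x)=\frac{x^\gamma-1}{\gamma}+A(t)H_{\gamma,\rho}(x)+o\bigl(A(t)\bigr)$ uniformly on compact $x$-sets. Since $b(t)D_t(x)\to x^\gamma-1>-1$ uniformly on compacts, $1+b(t)D_t(x)$ stays bounded away from $0$, so expanding $\ln(1+\cdot)$ and $1/b(t)$ around $1/\gamma$ and collecting terms yields
\begin{equation*}
	\frac{\ln U(tx)-\ln U(t)}{b(t)}-\ln x
	=\bigl(b(t)-\gamma\bigr)\left(\frac{1-x^{-\gamma}}{\gamma^2}-\frac{\ln x}{\gamma}\right)
	+A(t)\,x^{-\gamma}H_{\gamma,\rho}(x)+o\bigl(A(t)\bigr)+o\bigl(b(t)-\gamma\bigr).
\end{equation*}
The orders just established then identify the dominant contribution. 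When $b(t)-\gamma\sim\frac{\gamma}{\gamma+\rho}A(t)$ (that is, $\gamma>-\rho$, or $0<\gamma<-\rho$ with $l=0$) both terms are of order $A(t)$, and the elementary identity
\begin{equation*}
	\frac{\gamma}{\gamma+\rho}\left(\frac{1-x^{-\gamma}}{\gamma^2}-\frac{\ln x}{\gamma}\right)+x^{-\gamma}H_{\gamma,\rho}(x)=\frac{\rho}{\gamma+\rho}\,H_{0,\rho}(x)
\end{equation*}
gives $Q(t)=\frac{\rho}{\gamma+\rho}A(t)$ and $\rho'=\rho$. When $b(t)-\gamma$ dominates $A(t)$ (that is, $\gamma=-\rho$, or $0<\gamma<-\rho$ with $l\neq 0$) only the first term survives, and the identity $\frac{1-x^{-\gamma}}{\gamma^2}-\frac{\ln x}{\gamma}=-H_{0,-\gamma}(x)$ gives $Q(t)=\gamma-b(t)$ and $\rho'=-\gamma$ (equal to $\rho$ in the case $\gamma=-\rho$). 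Finally, if $\rho=0$ then $\gamma>-\rho=0$, so $b(t)-\gamma\sim A(t)$, and the coefficient of $A(t)$ on the right-hand side collapses because $\left(\frac{1-x^{-\gamma}}{\gamma^2}-\frac{\ln x}{\gamma}\right)+x^{-\gamma}H_{\gamma,0}(x)=0$; hence the whole expression is $o\bigl(A(t)\bigr)$, and therefore $o\bigl(Q(t)\bigr)$ whenever $A(t)=O\bigl(Q(t)\bigr)$, which is the last assertion.

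The step I expect to be the main obstacle is the bookkeeping establishing the rate of $b(t)-\gamma$ when $\gamma>0$: correctly identifying in each parameter regime which competing second-order effect dominates --- in particular the fact that an additive constant in $U$ is invisible to the $2ERV$ condition for $U$ (it cancels in $U(tx)-U(t)$) yet contributes a term of exact order $t^{-\gamma}$ to $b(t)-\gamma$, which is precisely why the hypothesis must split on whether $l=0$. Once that rate is in hand, \eqref{eq:2erv-log} follows from a disciplined Taylor expansion plus the elementary identities above; the remaining care is to verify that the neglected remainders are of strictly lower order than the surviving main term, notably in the resonant case $\gamma=-\rho$ and the degenerate case $\rho=0$.
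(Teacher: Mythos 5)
This lemma is not proved in the paper: it is stated as a verbatim citation of Lemma~B.3.16 in de Haan and Ferreira (2007), so there is no in-paper proof to compare against. Your blind reconstruction follows what is essentially the textbook's own route: set $b(t)=a(t)/U(t)$, first determine the exact order of $b(t)-\gamma_+$ relative to $A(t)$ in each parameter regime, and then push a uniform second-order expansion of $(U(tx)-U(t))/a(t)$ through $\ln(1+\,\cdot\,)$. The mechanism you single out as the crux---the additive constant $l$ in $U$ being invisible to the $2ERV$ condition for $U$ (since it cancels in $U(tx)-U(t)$) while contributing a term of exact order $t^{-\gamma}$ to $b(t)-\gamma$, which is precisely why the cases must split on $l=0$ versus $l\neq 0$---is indeed the central observation. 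I have checked the three algebraic identities you use, namely
\begin{equation*}
\frac{\gamma}{\gamma+\rho}\left(\frac{1-x^{-\gamma}}{\gamma^2}-\frac{\ln x}{\gamma}\right)+x^{-\gamma}H_{\gamma,\rho}(x)=\frac{\rho}{\gamma+\rho}H_{0,\rho}(x),
\qquad
\frac{1-x^{-\gamma}}{\gamma^2}-\frac{\ln x}{\gamma}=-H_{0,-\gamma}(x),
\end{equation*}
and the degenerate $\rho=0$ cancellation $\left(\frac{1-x^{-\gamma}}{\gamma^2}-\frac{\ln x}{\gamma}\right)+x^{-\gamma}H_{\gamma,0}(x)=0$; all hold, so the identification of $Q$ and $\rho'$ in each regime is consistent with the statement. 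To make this a complete proof rather than a proposal you would still need to justify the uniform (Potter-type) second-order bounds used to control the $o(A(t))$ remainder and to supply the second-order computation that classifies the convergence of $U(t)-a(t)/\gamma$ by the sign of $\gamma+\rho$, but the plan is sound and no step appears to fail.
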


\section{Main Results}
\label{sec:general-error}
We assess next the effect of replacing true variables with erroneous
approximations in the estimation of $\gamma=\gamma_Y$,
$U\left(n/k\right)=U_Y\left(n/k\right)$,
$a\left(n/k\right)=a_Y\left(n/k\right)$, and
$U\left(1/p\right)=U_Y\left(1/p\right)$. We restrict to almost surely positive
random variables since this is sufficient for our applications. Note that this
further implies $U\left(\infty\right) > 0$ which is needed for the estimators to
be well-defined. This assumption is usually assumed implicitly in the context of
extreme value theory.

\begin{theorem}
	\label{theorem:moment}
	Let $Y$ be an almost surely positive random variable with $F\in
	\mathcal{D}\left(G_\gamma\right)$, $\gamma\in\mathbb{R}$. Let $\bm Y =
	\left(Y_1, \ldots, Y_n\right)$ be i.i.d. copies of $Y$ and let $\hat{\bm Y}
	= \left(\hat Y_1, \ldots, \hat Y_n\right)$ be an approximated sample.
	Suppose Assumption \ref{item:k} holds and that
		\begin{equation} \label{eq:error-cond}
		\max_{0\leq j\leq k} \left|\frac{\hat{\bm Y}_{n-j,n}}
		{\bm Y_{n-j,n}} - 1\right|
		= O_\mathbb{P}\left(h_n\right)
	\end{equation}
	for some sequence $h_n$ such that  $z_n =
	\frac{h_nU\left(\frac{n}{k}\right)}{a\left(\frac{n}{k}\right)} = o(1)$.
	Then
	\begin{align}
		\left|\hat\gamma_M\left(\hat{\bm Y}\right)
		- \hat\gamma_M\left(\bm Y\right)\right|
		&= O_\mathbb{P}\left(z_n\right), \label{eq:moment-error} \\
		\left|\frac{\hat{\bm Y}_{n-k,n} - \bm Y_{n-k,n}}{a(n/k)}\right|
		&=  O_\mathbb{P}\left(z_n\right), \quad \textnormal{and}
		\label{eq:order-stat-error} \\
		\left|\frac{\hat\sigma_M\left(\hat{\bm Y}\right)
		- \hat\sigma_M\left(\bm Y\right)}{a(n/k)}\right|
		&= O_\mathbb{P}\left(z_n\right). \label{eq:a-error}
	\end{align}
	Furthermore, suppose that 
	\begin{enumerate}[start=2, label={(A\arabic*)}]
		\item $p = p_n$, $np = o\left(k\right)$ and $\ln\left(np\right) =
		o\left(\sqrt{k}\right)$ \label{item:p-usual-1}
		\end{enumerate}
	and
		\begin{enumerate}[start=3, label={(A\arabic*)}]
			\item $\sqrt{k}\left(\hat\gamma_M\left(\bm Y\right) - \gamma\right) =
		O_\mathbb{P}\left(1\right)$ and $\sqrt{k}z_n = O\left(1\right)$
		\label{item:gamma-bounded}
	\end{enumerate}
	hold. Let $d_n = k/(np)$ and $q_\gamma\left(t\right) = \int_1^t s^{\gamma -
	1}\ln s \, \mathrm{d}s$. Then
	\begin{equation}
		\left|\frac{\hat x_{p}\left(\hat{\bm Y}\right)
		- \hat x_{p}\left(\bm Y\right)}{a\left(\frac{n}{k}\right)
		q_\gamma\left(d_n\right)}\right|
		= O_\mathbb{P}\left(z_n\right). \label{eq:q-error}
	\end{equation}
\end{theorem}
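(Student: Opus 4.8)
The plan is to propagate the relative error in \eqref{eq:error-cond} through the chain of statistics $M_n^{(\ell)}$, then $\bigl(\hat\gamma_+,\hat\gamma_-,\hat\gamma_M,\hat\sigma_M\bigr)$, then $\hat x_p$. Write $b_n=a(n/k)/U(n/k)$; since $a(t)/U(t)\to\gamma_+$ the sequence $b_n$ is bounded, and by the definition of $z_n$ we have $z_n=h_n/b_n$, so $h_n=z_nb_n=o(b_n)=O(z_n)$. (Here and below $X_n\asymp_{\mathbb P}Y_n$ abbreviates $X_n=O_{\mathbb P}(Y_n)$ and $Y_n=O_{\mathbb P}(X_n)$.) Since $h_n\to 0$, taking logarithms in \eqref{eq:error-cond} gives $\max_{0\le j\le k}\left|\ln\hat{\bm Y}_{n-j,n}-\ln\bm Y_{n-j,n}\right|=O_{\mathbb P}(h_n)$, hence also $\max_{0\le j\le k-1}\left|\bigl(\ln\hat{\bm Y}_{n-j,n}-\ln\hat{\bm Y}_{n-k,n}\bigr)-\bigl(\ln\bm Y_{n-j,n}-\ln\bm Y_{n-k,n}\bigr)\right|=O_{\mathbb P}(h_n)$. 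Averaging over $j$ gives $M_n^{(1)}(\hat{\bm Y})-M_n^{(1)}(\bm Y)=O_{\mathbb P}(h_n)$, while $a^2-b^2=(a-b)(a+b)$ applied termwise gives $M_n^{(2)}(\hat{\bm Y})-M_n^{(2)}(\bm Y)=O_{\mathbb P}\bigl(h_n(M_n^{(1)}(\bm Y)+h_n)\bigr)$. I would then invoke the classical orders $M_n^{(1)}(\bm Y)\asymp_{\mathbb P}b_n$ and $M_n^{(2)}(\bm Y)-(M_n^{(1)}(\bm Y))^2\asymp_{\mathbb P}b_n^2$ (consequences of $U\in ERV_\gamma$ and \ref{item:k}; they follow from the standard representation of the upper order statistics together with $\ln U\in ERV_{\gamma_-}$, cf.\ \cite{dehaan2007}), along with $\bm Y_{n-k,n}/U(n/k)\stackrel{\mathbb P}{\to}1$, $\hat\gamma_-(\bm Y)\stackrel{\mathbb P}{\to}\gamma_-$ and $\hat\sigma_M(\bm Y)/a(n/k)\stackrel{\mathbb P}{\to}1$; since $h_n=o(b_n)$, these upgrade the previous two displays to the \emph{relative} bounds $M_n^{(1)}(\hat{\bm Y})=M_n^{(1)}(\bm Y)(1+O_{\mathbb P}(z_n))$, $M_n^{(2)}(\hat{\bm Y})=M_n^{(2)}(\bm Y)(1+O_{\mathbb P}(z_n))$ and, applying $a^2-b^2=(a-b)(a+b)$ once more, $M_n^{(2)}(\hat{\bm Y})-(M_n^{(1)}(\hat{\bm Y}))^2=\bigl(M_n^{(2)}(\bm Y)-(M_n^{(1)}(\bm Y))^2\bigr)(1+O_{\mathbb P}(z_n))$.

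From here \eqref{eq:moment-error}--\eqref{eq:a-error} follow quickly. Writing $\hat\gamma_-=\tfrac12\bigl(M_n^{(2)}-2(M_n^{(1)})^2\bigr)/\bigl(M_n^{(2)}-(M_n^{(1)})^2\bigr)$, numerator and denominator both change by $O_{\mathbb P}(z_nb_n^2)$ between $\bm Y$ and $\hat{\bm Y}$ while the denominator is $\asymp_{\mathbb P}b_n^2$, so $\hat\gamma_-(\hat{\bm Y})-\hat\gamma_-(\bm Y)=O_{\mathbb P}(z_n)$; together with $\hat\gamma_+(\hat{\bm Y})-\hat\gamma_+(\bm Y)=M_n^{(1)}(\hat{\bm Y})-M_n^{(1)}(\bm Y)=O_{\mathbb P}(h_n)=O_{\mathbb P}(z_n)$ this is \eqref{eq:moment-error}. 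For \eqref{eq:order-stat-error}, $\hat{\bm Y}_{n-k,n}-\bm Y_{n-k,n}=\bm Y_{n-k,n}\,O_{\mathbb P}(h_n)$ and $\bm Y_{n-k,n}\asymp_{\mathbb P}U(n/k)$ yield $\left|\hat{\bm Y}_{n-k,n}-\bm Y_{n-k,n}\right|/a(n/k)=O_{\mathbb P}\bigl(h_nU(n/k)/a(n/k)\bigr)=O_{\mathbb P}(z_n)$. For \eqref{eq:a-error}, I would regard $\hat\sigma_M=\bm Y_{n-k,n}\,M_n^{(1)}\,(1-\hat\gamma_-)$ as a product of three factors, each of which acquires a factor $1+O_{\mathbb P}(z_n)$ when $\bm Y$ is replaced by $\hat{\bm Y}$ (for the last, $1-\hat\gamma_-(\bm Y)\to1-\gamma_-\ge1$ stays bounded away from $0$), so $\hat\sigma_M(\hat{\bm Y})=\hat\sigma_M(\bm Y)(1+O_{\mathbb P}(z_n))$, and then divide by $a(n/k)$ and use $\hat\sigma_M(\bm Y)/a(n/k)\stackrel{\mathbb P}{\to}1$. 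Note that \ref{item:p-usual-1} and \ref{item:gamma-bounded} are not needed for these three estimates.

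For \eqref{eq:q-error}, set $\phi(\gamma)=(d_n^\gamma-1)/\gamma$, so that $\hat x_p(\bm Y)=\bm Y_{n-k,n}+\hat\sigma_M(\bm Y)\phi(\hat\gamma_M(\bm Y))$, and decompose
\[
\hat x_p(\hat{\bm Y})-\hat x_p(\bm Y)=\bigl(\hat{\bm Y}_{n-k,n}-\bm Y_{n-k,n}\bigr)+\bigl(\hat\sigma_M(\hat{\bm Y})-\hat\sigma_M(\bm Y)\bigr)\phi\bigl(\hat\gamma_M(\hat{\bm Y})\bigr)+\hat\sigma_M(\bm Y)\bigl(\phi(\hat\gamma_M(\hat{\bm Y}))-\phi(\hat\gamma_M(\bm Y))\bigr).
\]
The key identity, which explains the normalisation $a(n/k)q_\gamma(d_n)$, is $\phi'(\gamma)=\frac{\partial}{\partial\gamma}\bigl((d_n^\gamma-1)/\gamma\bigr)=\int_1^{d_n}s^{\gamma-1}\ln s\,\mathrm{d}s=q_\gamma(d_n)$, so the mean value theorem rewrites the last summand as $\hat\sigma_M(\bm Y)\,q_{\tilde\gamma}(d_n)\,\bigl(\hat\gamma_M(\hat{\bm Y})-\hat\gamma_M(\bm Y)\bigr)$ for some $\tilde\gamma$ between $\hat\gamma_M(\bm Y)$ and $\hat\gamma_M(\hat{\bm Y})$. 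Now \ref{item:p-usual-1}--\ref{item:gamma-bounded} give $d_n\to\infty$, $\ln d_n=o(\sqrt k)$, $\hat\gamma_M(\bm Y)-\gamma=O_{\mathbb P}(1/\sqrt k)$ and $z_n=O(1/\sqrt k)$, so by \eqref{eq:moment-error} also $\hat\gamma_M(\hat{\bm Y})-\gamma=O_{\mathbb P}(1/\sqrt k)$ and $\tilde\gamma-\gamma=O_{\mathbb P}(1/\sqrt k)$, whence $(\tilde\gamma-\gamma)\ln d_n=o_{\mathbb P}(1)$ (and similarly with $\hat\gamma_M(\hat{\bm Y})$ in place of $\tilde\gamma$). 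Writing $q_\gamma(d_n)=(\ln d_n)^2\psi(\gamma\ln d_n)$ with $\psi(u)=\int_0^1 ve^{uv}\,\mathrm{d}v$ and $\phi(\gamma)=(\ln d_n)\chi(\gamma\ln d_n)$ with $\chi(u)=\int_0^1 e^{uv}\,\mathrm{d}v$ (both continuous and positive, with $\psi(0)=\tfrac12$, $\chi(0)=1$, and known asymptotics as $u\to\pm\infty$), one checks separately for $\gamma<0$, $\gamma=0$ and $\gamma>0$ that $q_{\tilde\gamma}(d_n)\asymp_{\mathbb P}q_\gamma(d_n)$ and $\phi(\hat\gamma_M(\hat{\bm Y}))=O_{\mathbb P}(q_\gamma(d_n))$. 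Since moreover $q_\gamma(d_n)$ is bounded below by a positive constant for large $n$, \eqref{eq:order-stat-error} gives $\hat{\bm Y}_{n-k,n}-\bm Y_{n-k,n}=O_{\mathbb P}(z_na(n/k))=O_{\mathbb P}(z_na(n/k)q_\gamma(d_n))$, and inserting \eqref{eq:a-error} and $\hat\sigma_M(\bm Y)=O_{\mathbb P}(a(n/k))$ into the remaining two summands bounds each by $O_{\mathbb P}(z_na(n/k)q_\gamma(d_n))$, which is \eqref{eq:q-error}.

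The hardest part is the final step: bounding $q_{\tilde\gamma}(d_n)$ and $\phi(\hat\gamma_M(\hat{\bm Y}))$ against $q_\gamma(d_n)$ uniformly over the random arguments clustering at $\gamma$. This is precisely where the conditions $\ln(np)=o(\sqrt k)$ and $\sqrt k\,z_n=O(1)$ enter, through $d_n^{\,\tilde\gamma-\gamma}=\exp\{(\tilde\gamma-\gamma)\ln d_n\}=O_{\mathbb P}(1)$ with the exponent $o_{\mathbb P}(1)$; and the borderline case $\gamma=0$, where both $q_\gamma(d_n)$ and $\phi$ degenerate, is where the explicit $\psi$, $\chi$ expansions become indispensable. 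A secondary subtlety, already present in \eqref{eq:moment-error}--\eqref{eq:a-error}, is that for $\gamma\le0$ the statistics $M_n^{(\ell)}(\bm Y)$ vanish in the limit, so the perturbation analysis of $\hat\gamma_-$ must be carried out at the intrinsic scale $b_n$, not by treating $M_n^{(\ell)}$ as of order one.
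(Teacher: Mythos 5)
Your proof is correct and takes essentially the same route as the paper: you propagate the multiplicative error through $M_n^{(\ell)}$ and hence $(\hat\gamma_M,\hat\sigma_M)$, decompose $\hat x_p(\hat{\bm Y})-\hat x_p(\bm Y)$ into an order-statistic term, a scale term, and a shape term, and control the shape term at the scale $q_\gamma(d_n)$ using $(\hat\gamma_M(\hat{\bm Y})-\hat\gamma_M(\bm Y))\ln d_n=o_{\mathbb P}(1)$, which is exactly where \ref{item:p-usual-1} and \ref{item:gamma-bounded} enter. The variations are cosmetic: where the paper cites \cite{virta2024} for the $M_n^{(\ell)}$ perturbation bounds and the proof of \cite[Corollary 4.3.2]{dehaan2007} for $q_{\tilde\gamma}(d_n)/q_\gamma(d_n)\to 1$, you re-derive these from the log-linearization of \eqref{eq:error-cond} and the explicit representations $q_\gamma(d_n)=(\ln d_n)^2\psi(\gamma\ln d_n)$, $\phi(\gamma)=(\ln d_n)\chi(\gamma\ln d_n)$, and you replace the paper's integral identity for $\phi(\hat\gamma)-\phi(\tilde\gamma)$ by the equivalent mean-value argument $\phi'(\gamma)=q_\gamma(d_n)$.
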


\begin{remark}
	\label{remark:normality}
	In order to obtain asymptotic normality, let $U\in 2ERV_{\gamma, \rho}$ with
	$\gamma\neq\rho$. Then, under suitable additional
	conditions\footnote{Precise conditions are $\gamma\neq \rho$ and
	$\sqrt{k}Q\left(n/k\right)\to\lambda\in\mathbb{R}$ with $Q = A$
	from~\eqref{eq:2erv} if $\gamma > 0$ and $\rho = 0$, and $Q$ from
	\eqref{eq:Q} otherwise.}
	\begin{equation} \label{eq:standard-normality-components}
		\sqrt{k}\left(\hat\gamma_M\left(\bm Y\right) - \gamma,
		\frac{\sigma_M\left(\bm Y\right)}{a\left(\frac{n}{k}\right)} - 1,
		\frac{\bm Y_{n-k,n}
			- U\left(\frac{n}{k}\right)}{a\left(\frac{n}{k}\right)}\right)
		\stackrel{\mathcal{D}}{\to} \left(\Gamma, \Lambda, B\right)
	\end{equation}
	for jointly normal $(\Gamma,\Lambda,B)$, see~\cite[Corollary
	4.2.2]{dehaan2007}. For the extreme quantile estimator, this translates into 
	\begin{equation} \label{eq:standard-normality-q}
		\sqrt{k}\frac{\hat x_{p}\left(\bm Y\right)
		- U\left(1/p\right)}{a\left(\frac{n}{k}\right)
		q_\gamma\left(d_n\right)} \stackrel{\mathcal{D}}{\to}
		\Gamma + \left(\gamma_-\right)^2 B - \gamma_-\Lambda
		- \lambda\mathbbm{1}_{\{\gamma < \rho \leq 0\}}\frac{\gamma_-}
		{\gamma_- + \rho},
	\end{equation}
	where the coefficient $\lambda$ related to the bias term can be derived from
	Lemma~\ref{lemma:log-second}, see~\cite[Theorem 4.3.1]{dehaan2007}. Now, if
	$\sqrt{k}z_n \to 0$, Slutsky's lemma implies that we can replace $\bm Y$ in
	limits~\eqref{eq:standard-normality-components}
	and~\eqref{eq:standard-normality-q} with $\hat{\bm Y}$.
\end{remark}

In order to obtain rate of convergence and limiting distribution, it is required
that $\sqrt{k} z_n\to 0$. The following lemma provides sufficient simpler
conditions for this.
\begin{lemma}
	\label{lemma:error-behavior}
	Suppose~\ref{item:k} and that $U\in 2ERV_{\gamma, \rho}$ with $\rho < 0$ and
	$\gamma\in\mathbb{R}$. The sequence $z_n = \frac{h_n
	U\left(n/k\right)}{a\left(n/k\right)}$ satisfies $\sqrt{k} z_n\to 0$, if one
	of the following conditions holds:
	\begin{enumerate}
		\item $\gamma > 0$ and $\sqrt{k} h_n \to 0$, \label{item:gamma-pos}
		
		\item $\gamma = 0$ and $\sqrt{k} h_n =
		O\left(\left(\frac{n}{k}\right)^{-\delta}\right)$ for some $\delta > 0$,
		\label{item:gamma-0}

		\item $\gamma < 0$ and $\sqrt{k} h_n =
		o\left(\left(\frac{n}{k}\right)^{\gamma}\right)$.
		\label{item:gamma-neg}
	\end{enumerate}
\end{lemma}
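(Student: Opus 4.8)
The plan is to reduce the statement to the asymptotic size of the ratio $U(t)/a(t)$ as $t=n/k\to\infty$ (note that $n/k\to\infty$ by \ref{item:k}), and then, in each of the three cases, to combine this with the assumed rate of $h_n$ through the factorisation $\sqrt{k}\,z_n=(\sqrt{k}\,h_n)\cdot\frac{U(n/k)}{a(n/k)}$.

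The first step is to control $U/a$, using that $F\in\mathcal{D}(G_\gamma)$ is equivalent to $U\in ERV_\gamma$ with an auxiliary function $a\in RV_\gamma$. If $\gamma>0$, then $U\in RV_\gamma$, $U(t)\to\infty$, and it is standard that $a(t)\sim\gamma\,U(t)$, so $U(n/k)/a(n/k)\to 1/\gamma=O(1)$. If $\gamma=0$, then $U\in RV_0$ (it belongs to $\Pi$ when $U(\infty)=\infty$, and $U(t)\to U(\infty)\in(0,\infty)$ when $U(\infty)<\infty$), hence $U/a\in RV_0$ and therefore $U(n/k)/a(n/k)=o\big((n/k)^{\delta}\big)$ for the $\delta>0$ of the hypothesis (indeed for every $\delta>0$). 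If $\gamma<0$, then $U(\infty)\in(0,\infty)$, $V:=U(\infty)-U\in RV_\gamma$, and letting $x\to\infty$ in \eqref{eq:erv} gives $a(t)\sim-\gamma\,V(t)$, so $U(t)/a(t)\sim\frac{U(\infty)}{-\gamma\,V(t)}$. Here the second-order condition enters: for $\gamma<0$, $U\in 2ERV_{\gamma,\rho}$ transfers to $V\in 2RV_{\gamma,\rho}$ with the same second-order index (the associated second-order auxiliary function being a constant multiple of $A\in RV_\rho$), hence this auxiliary function is integrable at infinity, and the Karamata representation of $V$ then has a convergent slowly varying part; thus $V(t)\sim c_V\,t^{\gamma}$ for some $c_V>0$, so $U(t)/a(t)\sim c\,t^{-\gamma}$ for some $c>0$ and $U(n/k)/a(n/k)=O\big((n/k)^{-\gamma}\big)$.

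With these bounds in hand the conclusion is a one-line product estimate in each case: in case \ref{item:gamma-pos}, $\sqrt{k}\,z_n=o(1)\cdot O(1)=o(1)$; in case \ref{item:gamma-0}, $\sqrt{k}\,z_n=O\big((n/k)^{-\delta}\big)\cdot o\big((n/k)^{\delta}\big)=o(1)$; and in case \ref{item:gamma-neg}, $\sqrt{k}\,z_n=o\big((n/k)^{\gamma}\big)\cdot O\big((n/k)^{-\gamma}\big)=o(1)$. In every case $\sqrt{k}\,z_n\to 0$.

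The only genuinely non-routine point is the bound $U(t)/a(t)=O(t^{-\gamma})$ in the case $\gamma<0$. A purely first-order argument together with Potter's inequalities only gives $U(t)/a(t)=o\big(t^{-\gamma+\varepsilon}\big)$ for every $\varepsilon>0$, which is not enough: it would leave a potentially divergent slowly varying factor (for instance a logarithm) uncancelled against the factor $(n/k)^{-\gamma}$ produced by $\sqrt{k}\,h_n=o\big((n/k)^{\gamma}\big)$. It is exactly the hypothesis $\rho<0$ that rules out such factors, and this is the step I would treat with care, by reducing to $V=U(\infty)-U\in 2RV_{\gamma,\rho}$ and quoting the second-order representation results of \cite{dehaan2007} rather than reproving them; the cases $\gamma\ge 0$ use only first-order regular variation.
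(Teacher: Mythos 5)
Your proposal is correct and follows essentially the same route as the paper: factor $\sqrt{k}\,z_n=(\sqrt{k}h_n)\cdot U(n/k)/a(n/k)$ and bound the ratio $U(n/k)/a(n/k)$ case by case, invoking the second-order condition with $\rho<0$ precisely where a first-order argument would leave an uncontrolled slowly varying factor. The only cosmetic difference is that the paper obtains the key fact for $\gamma\leq 0$ directly from Lemma~\ref{lemma:neves} (i.e.\ $t^{-\gamma}a(t)\to c\in(0,\infty)$ when $\rho<0$), which is exactly the statement you re-derive for $\gamma<0$ via $V=U(\infty)-U$ and the second-order representation, while for $\gamma=0$ your slow-variation argument for $U/a$ even avoids the second-order input the paper uses there.
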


\section{Extreme Quantile Region Estimation for Multivariate Elliptical
Distributions}
\label{sec:elliptical}

Let $X$ be a multivariate random variable with a density $f_X$ and consider
quantile regions of the form
\begin{equation}
	\label{eq:region}
	Q_p = \{x\in \mathbb{R}^d : f_X(x) \leq \beta_p \},
\end{equation}
where $\beta_p$ is chosen such that $\mathbb{P}\left(X\in Q_p\right) = p$.
Regions $Q_p$ corresponding to a small probability $p$ are called extreme
quantile regions. In this section we apply Theorem~\ref{theorem:moment} to
construct an estimator for the extreme quantile region $Q_p$ under the
assumption of multivariate ellipticity. In
Section~\ref{sec:elliptical-preliminaries} we review elliptical distributions
and in Section~\ref{sec:elliptical-main-result} we study the estimation
procedure. These results complement \cite{pere2024a} in which only the
heavy-tailed case and the Hill estimator was analyzed. However, we note that
Theorem \ref{theorem:moment} could be applied in various other settings as well.

\subsection{Elliptical Distribution and Assumptions on the Generating Variate}
\label{sec:elliptical-preliminaries}
The following gives definition of multivariate elliptical distributions. Note
that one could give the definition in terms of characteristic functions,
see~\cite[Definition 2]{frahm2004}. However, using \cite[Theorem
1]{cambanis1981} leads to the following convenient stochastic representation. 

\begin{definition}
	\label{def:ellipticity}
	Let $\mu\in\mathbb{R}^d$ be a vector and $\Sigma\in\mathbb{R}^{d\times d}$ a
	symmetric positive definite matrix with $\det\left(\Sigma\right) = 1$. Let
	$\mathcal{R}$ a nonnegative random variable and $S$ a $d$-variate random
	vector uniformly distributed over the unit-sphere $\{ x\in\mathbb{R}^d:
	x^\intercal x = 1\}$ such that $\mathcal{R}$ and $S$ are independent. A
	$d$-variate random variable $X$ is elliptically distributed with the
	location vector $\mu$, the scatter matrix $\Sigma$ and the generating
	variate $\mathcal{R}$ if
	\begin{equation} \label{eq:elliptical-representation}
	  X \stackrel{\mathcal{D}}{=} \mu + \mathcal{R} \Sigma^{1/2} S,
	\end{equation}
	where $\Sigma^{1/2} \in \mathbb{R}^{d\times d}$ is the unique symmetric
	positive definite matrix such that $\Sigma = \Sigma^{1/2} \Sigma^{1/2}$.
\end{definition}

Note that while the matrix $\Sigma^{1/2}$ could be replaced by any other matrix
$\Lambda$ such that $\Lambda \Lambda^\intercal = \Sigma$ and
$\mathrm{Rank}\left(\Lambda\right) = \mathrm{Rank}\left(\Sigma\right)$, this
choice does not play role in what follows and hence we take the symmetric square
root $\Sigma^{1/2}$. Moreover, without the assumption $\det\left(\Sigma\right) =
1$, the generating variate $\mathcal{R}$ and the scatter matrix $\Sigma$ are
unique only up to a positive constant. That is, the parameters $\left(\mu,
\mathcal{R}, \Sigma\right)$ and $\left(\mu, \frac{1}{\sqrt{c}}\mathcal{R},
c\Sigma\right)$, $c > 0$, define the same model. To guarantee identifiability,
typical constraints include  $\det\left(\Sigma\right) = 1$, $\Sigma_{11} = 1$,
or $\mathrm{Trace}\left(\Sigma\right) = d$~\cite{paindaveine2008}. 

In the sequel, we assume that the generating variate is absolutely continuous
with density $f_\mathcal{R}$, implying that the density of the corresponding
elliptically distributed random variable $X$ exists, see~\cite[Corollary
4]{frahm2004}. Note that this implies  $U_\mathcal{R}\left(\infty\right)\in (0,
\infty]$. In addition, we pose the following assumptions on the generating
variate $\mathcal{R}$. 
\begin{enumerate}[start=4, label={(A\arabic*)}]
	\item $\mathcal{R}$ is supported on $(0,U_\mathcal{R}(\infty))$ and the
	density $f_\mathcal{R}$ is eventually decreasing.\label{item:density}
	\item $U_\mathcal{R} \in 2ERV_{\gamma, \rho}$ with $\gamma > -1/2$ and $\rho
	< 0$ such that $\gamma\neq\rho$. \label{item:2erv}
\end{enumerate}
The first assumption on the support simply states that there are no intervals
inside $(0,U_\mathcal{R}(\infty))$ with zero probability. Together with
$f_\mathcal{R}$ being eventually decreasing this implies that, for a
sufficiently small $p$, we can represent quantile regions defined
in~\eqref{eq:region} by
\begin{equation} \label{eq:q-representation}
	Q_p = \left\{x \in\mathbb{R}^d : \|x - \mu\|_\Sigma \geq r_p\right\},
\end{equation}
where $r_p = U_\mathcal{R}\left(1/p\right)$ and $\|u\|_A = \sqrt{u^\intercal
A^{-1} u}$ denotes the norm induced by a symmetric positive definite matrix
$A\in\mathbb{R}^{d\times d}$ (see~\cite[Section 3.2, pages 5--6]{pere2024a}).

\subsection{Construction of the Estimator and Consistency}
\label{sec:elliptical-main-result}

Let $X$ be a $d$-variate elliptically distributed random variable with the
location vector $\mu$, the scatter matrix $\Sigma$, and the generating variate
$\mathcal{R}$. The idea is to estimate $\mu$, $\Sigma$, and $r_p$ in
\eqref{eq:q-representation}. Let $\bm X = \left(X_1, \ldots, X_n\right)$ be an
i.i.d.\ sample from $X$. Assume that $\hat\mu\left(\bm X\right)$ and
$\hat\Sigma\left(\bm X\right)$ are $\sqrt{n}$-consistent estimators of the
location vector $\mu$ and the scatter matrix $\Sigma$. Throughout, we also
assume that $\hat\Sigma\left(\bm X\right)$ is symmetric positive definite.
Denote
\begin{align}
	R &= \|X - \mu\|_{\Sigma}, &
	\hat R &= \|X - \hat\mu\left(\bm X\right)\|_{\hat\Sigma\left(\bm X\right)},
	\label{eq:r} \\
	R_i &= \|X_i - \mu\|_{\Sigma}, &
	\hat R_i &= \|X_i - \hat\mu\left(\bm X\right)\|
	_{\hat\Sigma\left(\bm X\right)} \label{eq:ri}.
\end{align}
Now, by \eqref{eq:elliptical-representation}, $R \stackrel{\mathcal{D}}{=}
\mathcal{R}$. Let $\bm R = \left(R_1, \ldots, R_n\right)$ and $\hat{\bm R} =
\left(\hat R_1, \ldots, \hat R_n\right)$. As we do not observe $\bm R$, the
estimator for the extreme quantile region $Q_p$ is defined as 
\begin{equation} \label{eq:q-estimator}
	\hat Q_p = \left\{x\in\mathbb{R}^d :
	\left\|x - \hat\mu\left(\bm X\right)\right\|_{\hat\Sigma\left(\bm X\right)}
	\geq \hat x_p\left(\hat{\bm R}\right)\right\},
\end{equation} 
where $\hat x_p\left(\hat{\bm R}\right)$ is the extreme quantile estimator for
the generating variate, given by \eqref{eq:q-uni-est}.

\begin{remark} \label{remark:loc-scat} It is well-known that if
	$\mathbb{E}\left(\mathcal{R}^4\right) < \infty$, then the sample mean vector
	and the sample covariance matrix are $\sqrt{n}$-consistent estimators for
	the location and for the scatter up to a positive constant. However, in the
	case $F_\mathcal{R}\in\mathcal{D}\left(G_\gamma\right)$ with $\gamma > 1/4$,
	the fourth moment is not finite~\cite[Exercise 1.16.]{dehaan2007}. Also, the
	sample mean and the sample covariance are extremely sensitive to outliers.
	However, even in the heavy-tailed case (large $\gamma$),
	$\sqrt{n}$-consistent and robust alternatives exist for the estimators of
	the location and the scatter up to a positive constant. We mention the
	minimum covariance determinant method (MCD), $M$-estimators, $R$-estimators,
	and $S$-estimators (see, e.g.,~\cite{frahm2009,hubert2018,paindaveine2014}
	and the references therein). Often in practice, as in our application, the
	estimation of $\Sigma$ up to a positive constant is sufficient. However,
	estimation of the specific scatter is usually feasible. For example, if
	$\sqrt{n}\left(\tilde\Sigma\left(\bm X\right)- c\Sigma\right)$ admits an
	asymptotic distribution for $c > 0$, then the asymptotic distribution of
	$\sqrt{n}\left(\hat\Sigma\left(\bm X\right)- \Sigma\right)$ can be acquired
	with the delta method. In our case we would set $\hat\Sigma\left(\bm
	X\right) = \frac{\tilde \Sigma\left(\bm X\right)}{\left(\det\left(\tilde
	\Sigma\left(\bm X\right)\right)\right)^{1/d}}$. 
\end{remark}

\begin{theorem} \label{theorem:Q-hat-consistent} Let $X$ be a $d$-variate
	elliptically distributed random variable with the location vector $\mu$, the
	scatter matrix $\Sigma$, and the generating variate $\mathcal{R}$ satisfying
	Assumptions~\ref{item:density} and~\ref{item:2erv}. Let $\bm X = \left(X_1,
	\ldots, X_n\right)$ be an i.i.d.\ sample from $X$. Denote
	$q_\gamma\left(t\right) = \int_1^t s^{\gamma - 1}\ln s \, \mathrm{d}s$ and
	$d_n = k/(np)$. Assume that $k=k_n$ satisfies \ref{item:k}, and assume
	that, as $n\to\infty$, the following conditions hold:
	\begin{enumerate}[start=6, label={(A\arabic*)}]
		\item $\sqrt{k} Q\left(n/k\right)\to\lambda\in\mathbb{R}$, where $Q$ is
		the auxiliary function from the second-order condition for $\ln
		U_\mathcal{R}$.
		\label{item:second}
		\item $np=np_n = O\left(1\right)$ and
		$q_\gamma\left(d_n\right)/\left(d_n^\gamma\sqrt{k}\right)\to 0$.
		\label{item:p-usual-2}
		\item $\sqrt{\frac{k}{n}}\frac{d_n^\gamma - 1}{\gamma} = O(1)$ (in the
		case $\gamma = 0$ we interpret $\frac{d_n^\gamma - 1}{\gamma} = \ln
		d_n$).
		\label{item:p-slow}
		\item $\sqrt{n}\left(\hat\mu\left(\bm X\right) - \mu\right) =
		O_\mathbb{P}\left(1\right)$ and $\sqrt{n}\left(\hat\Sigma\left(\bm
		X\right) - \Sigma\right) = O_\mathbb{P}\left(1\right)$.
		\label{item:loc-scat}
	\end{enumerate}
	Then, as $n\to\infty$, we have
	\begin{equation*} 
		\frac{\mathbb{P}\left(X \in \hat{Q}_p\triangle Q_p\right)}{p}
		\stackrel{\mathbb{P}}{\to} 0,
	\end{equation*}
	where $Q_p = Q_{p_n}$ is given by~\eqref{eq:region}, $\hat Q_p = \hat
	Q_{p_n}$ is given by~\eqref{eq:q-estimator}, and where $A\triangle B =
	\left(A\setminus B\right) \cup \left(B\setminus A\right)$ denotes the
	symmetric difference between the sets $A,B\subset\mathbb{R}^d$.
\end{theorem}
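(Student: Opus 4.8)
The plan is to reduce the symmetric-difference probability to a statement about the elliptical radii, and then apply Theorem~\ref{theorem:moment} to the sample $\hat{\bm R}$ of approximated generating variates. By the representation~\eqref{eq:q-representation}, both $Q_p$ and $\hat Q_p$ are complements of $\|\cdot-\mu\|_\Sigma$- (resp.\ $\|\cdot-\hat\mu\|_{\hat\Sigma}$-) balls, so $X\in Q_p\triangle\hat Q_p$ happens exactly when the point $X$ falls between the two ellipsoidal boundaries. First I would show that the event $\{X\in\hat Q_p\triangle Q_p\}$ is contained in $\{\min(r_p,\hat x_p(\hat{\bm R}))\le \tilde R\le \max(r_p,\hat x_p(\hat{\bm R}))\}$ up to the small discrepancy between the norms $\|\cdot\|_\Sigma$ and $\|\cdot\|_{\hat\Sigma}$, where $\tilde R=\|X-\hat\mu\|_{\hat\Sigma}$; since $\sqrt n(\hat\mu-\mu)$ and $\sqrt n(\hat\Sigma-\Sigma)$ are $O_\mathbb{P}(1)$ by~\ref{item:loc-scat}, one has $\tilde R = R(1+O_\mathbb{P}(n^{-1/2}))$ uniformly (this is the type of estimate used to verify~\eqref{eq:error-cond}; cf.\ \cite[Section 3.2]{pere2024a}). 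Conditioning on the estimators and using $R\stackrel{\mathcal D}{=}\mathcal R$ with density $f_\mathcal{R}$, the probability in question is, up to negligible corrections, $|F_\mathcal{R}(\hat x_p(\hat{\bm R}))-F_\mathcal{R}(r_p)|$, i.e.\ the mass $f_\mathcal{R}$ assigns to the interval between the estimated and true extreme quantiles.

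Next I would establish that $\hat x_p(\hat{\bm R})/r_p\stackrel{\mathbb P}{\to}1$ at an appropriate rate. Assumptions~\ref{item:density} and~\ref{item:2erv} put $U_\mathcal{R}$ in $2ERV_{\gamma,\rho}$ with $\gamma>-1/2$, $\rho<0$, $\gamma\ne\rho$, which is exactly the regime in which the classical moment-estimator theory of Section~\ref{sec:preliminaries} applies: \ref{item:second} gives $\sqrt k Q(n/k)\to\lambda$, so by~\cite[Corollary 4.2.2, Theorem 4.3.1]{dehaan2007} the idealized estimator $\hat x_p(\bm R)$ built from the true radii satisfies the asymptotic normality~\eqref{eq:standard-normality-q}, and in particular $\sqrt k\,(\hat x_p(\bm R)-U_\mathcal{R}(1/p))/(a(n/k)q_\gamma(d_n)) = O_\mathbb{P}(1)$; note~\ref{item:p-usual-2} here plays the role of~\ref{item:p-usual-1}. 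Then I would pass from $\bm R$ to $\hat{\bm R}$ via Theorem~\ref{theorem:moment}: the $\sqrt n$-consistency of $\hat\mu,\hat\Sigma$ yields~\eqref{eq:error-cond} with $h_n\asymp n^{-1/2}$, and for $\gamma>-1/2$ one checks $\sqrt k z_n\to0$ and $\sqrt k(\hat\gamma_M(\bm R)-\gamma)=O_\mathbb P(1)$, so~\ref{item:gamma-bounded} holds and~\eqref{eq:q-error} gives $|\hat x_p(\hat{\bm R})-\hat x_p(\bm R)| = o_\mathbb P(a(n/k)q_\gamma(d_n))$. Combining, $\hat x_p(\hat{\bm R}) = U_\mathcal{R}(1/p) + O_\mathbb P(a(n/k)q_\gamma(d_n))$.

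It then remains to convert this into control of the $F_\mathcal{R}$-mass between $\hat x_p(\hat{\bm R})$ and $r_p=U_\mathcal{R}(1/p)$, divided by $p$. Writing $\hat x_p(\hat{\bm R}) = r_p(1+\varepsilon_n)$ with $\varepsilon_n = O_\mathbb P\big(a(n/k)q_\gamma(d_n)/U_\mathcal{R}(1/p)\big)$, a mean value argument gives $|F_\mathcal{R}(\hat x_p(\hat{\bm R}))-F_\mathcal{R}(r_p)|\le r_p|\varepsilon_n|\, f_\mathcal{R}(\xi_n)$ for some $\xi_n$ between the two values, and since $f_\mathcal{R}$ is eventually decreasing one can bound $f_\mathcal{R}(\xi_n)$ in terms of $p/a(1/p)$-type quantities using the regular-variation relations $1-F_\mathcal{R}(U_\mathcal{R}(t))\sim 1/t$ and $U_\mathcal{R}'(t)\sim a(t)/t$ (so that $r_p f_\mathcal{R}(r_p)\asymp r_p\,p/a(1/p)$). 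Dividing by $p$, the ratio becomes, up to constants, $\big(r_p/a(1/p)\big)\cdot\varepsilon_n\asymp \big(a(n/k)/a(1/p)\big)\,q_\gamma(d_n)$, which is handled by the remaining assumptions: \ref{item:p-usual-2}'s condition $q_\gamma(d_n)/(d_n^\gamma\sqrt k)\to0$ together with $a(n/k)d_n^\gamma\asymp a(1/p)$ (a consequence of $a\in RV_\gamma$ and $d_n=k/(np)$) forces the bound to $o_\mathbb P(1)$, while~\ref{item:p-slow} absorbs the edge contributions coming from the norm mismatch $\|\cdot\|_{\hat\Sigma}$ vs.\ $\|\cdot\|_\Sigma$ (an $O_\mathbb P(n^{-1/2})$ relative perturbation of the radius, whose $F_\mathcal{R}$-mass near level $r_p$ scales like $\sqrt{k/n}\,(d_n^\gamma-1)/\gamma$ after dividing by $p$). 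The main obstacle I anticipate is exactly this last bookkeeping: carefully showing that \emph{both} error sources — the extreme-quantile estimation error from Theorem~\ref{theorem:moment} and the $\sqrt n$-rate perturbation of the ellipsoidal shape — produce symmetric-difference mass that is $o(p)$, uniformly enough to survive the $O_\mathbb P$ qualifiers, which requires combining the second-order expansion of $U_\mathcal{R}$ with the monotonicity of $f_\mathcal{R}$ and keeping track of how $q_\gamma(d_n)$, $d_n^\gamma$, and $a(n/k)/a(1/p)$ interrelate under~\ref{item:p-usual-2}–\ref{item:p-slow}.
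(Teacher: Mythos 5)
Your overall architecture is right in outline (reduce to the radii, invoke Theorem~\ref{theorem:moment} with $h_n=n^{-1/2}$ via the $\sqrt n$-consistency of $\hat\mu,\hat\Sigma$, use \ref{item:second} and the classical moment-estimator theory for the oracle sample $\bm R$, and let \ref{item:p-usual-2}--\ref{item:p-slow} absorb the rates), but the two steps that carry the actual difficulty are not justified and, as sketched, do not go through. First, the reduction of $\mathbb{P}(X\in\hat Q_p\triangle Q_p)/p$ to $|F_\mathcal{R}(\hat x_p(\hat{\bm R}))-F_\mathcal{R}(r_p)|/p$ is not valid as stated: $\hat Q_p$ and $Q_p$ are complements of ellipsoids with \emph{different} centers and shape matrices, and the discrepancy $R-\hat R$ for the independent test point $X$ depends on $X$ itself, so "conditioning on the estimators" does not turn the symmetric difference into a difference of $F_\mathcal{R}$-values at two levels. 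The paper resolves exactly this by (i) writing $\mathbb{P}(X\in\hat Q_p)=\mathbb{P}\bigl(\mathcal{R}>\hat x_p(\hat{\bm R})+(R-\hat R)\bigr)$ and absorbing $R-\hat R$ into the location component $\hat b_n$ (controlled by Lemma~\ref{lemma:gvariate-rate} and $\sqrt{k}z_n\to0$), and (ii) introducing the sandwich region $\tilde Q_p\subset\hat Q_p\cap Q_p$ built from $\tilde r_p$ (via \cite[Lemma 2]{pere2024a}), which reduces everything to showing $\mathbb{P}(X\in\hat Q_p)/p\to1$ and $\mathbb{P}(X\in\tilde Q_p)/p\to1$; your "norm mismatch" term $\sqrt{k/n}\,(d_n^\gamma-1)/\gamma$ is indeed where \ref{item:p-slow} enters in the paper's bound on $\sqrt{k}\tilde b_n/a(n/k)$, but you would need an analogue of this sandwich construction to make your containment rigorous.

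Second, your conversion of quantile accuracy into mass control relies on a mean-value argument with $f_\mathcal{R}$ and the relations $U_\mathcal{R}'(t)\sim a(t)/t$, $f_\mathcal{R}(r_p)\sim p/a(1/p)$. These are von Mises--type conditions that are neither assumed (Assumption~\ref{item:density} gives only eventual monotonicity of $f_\mathcal{R}$) nor derived, and you additionally need this relation to hold \emph{locally uniformly} so that $f_\mathcal{R}(\xi_n)/f_\mathcal{R}(r_p)$ is bounded at the random intermediate point $\xi_n$; for $\gamma<0$ this must be argued relative to the distance to the finite endpoint $U_\mathcal{R}(\infty)$, which monotonicity alone does not give. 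The paper avoids all of this by invoking \cite[Theorem A.1]{einmahl2009}, which converts $O_\mathbb{P}(1)$-boundedness of $\sqrt{k}(\hat b_n-U(n/k))/a(n/k)$, $\sqrt{k}(\hat\gamma_M-\gamma)$, and $\sqrt{k}(\hat\sigma_M/a(n/k)-1)$ directly into $\mathbb{P}(\mathcal{R}>\cdot)/p\to1$ under the second-order condition, with no density input. Finally, a bookkeeping slip: your combined bound $\hat x_p(\hat{\bm R})=r_p+O_\mathbb{P}\bigl(a(n/k)q_\gamma(d_n)\bigr)$ drops the factor $1/\sqrt{k}$ (available since $\sqrt{k}z_n\to0$ by Lemma~\ref{lemma:error-behavior} for $\gamma>-1/2$); without it, your final estimate is of order $q_\gamma(d_n)/d_n^\gamma$, which \ref{item:p-usual-2} only makes $o(\sqrt{k})$, not $o(1)$, so the argument would not close as written.
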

Note that in extreme quantile region estimation, it is customary to assess
consistency using symmetric differences,
see~\cite{cai2011,einmahl2013,einmahl2009,he2016}.

The following two remarks discuss the assumptions made in
Theorem~\ref{theorem:Q-hat-consistent}.

\begin{remark}
	In Theorem \ref{theorem:Q-hat-consistent} we suppose $\gamma > -1/2$. We
	note that Assumption~\ref{item:loc-scat} implies that we can set $h_n =
	1/\sqrt{n}$ in \eqref{eq:error-cond}, see Lemma~\ref{lemma:gvariate-error}
	in Appendix \ref{appendix:sec:applications}. Then  $\sqrt{k} z_n =
	\sqrt{\frac{k}{n}}
	\frac{U_{\mathcal{R}}\left(n/k\right)}{a_{\mathcal{R}}\left(n/k\right)}\to
	0$ provided that $U_{\mathcal{R}}\in ERV_{\gamma, \rho}$ with $\gamma >
	-1/2$ and $\rho < 0$, see Lemma~\ref{lemma:error-behavior}. However, in the
	case $\gamma \leq -1/2$, Assumption~\ref{item:loc-scat} does not guarantee
	that $\sqrt{k} z_n\to 0$. On the other hand, for $\gamma\leq -1/2$ one can
	rely on simpler procedure. Indeed, suppose $\mathcal{R}\in 2ERV_{\gamma,
	\rho}$ with $\gamma, \rho < 0$ and let $\bm R = (R_1, \ldots, R_n)$ be an
	i.i.d.\ sample of $\mathcal{R}$. Let $p = 1/(cn)$ for some $c\in(0, \infty)$
	and suppose that the assumptions of \cite[Theorem 4.3.1]{dehaan2007} hold.
	Then by combining Lemma~\ref{lemma:neves} and~\cite[Exercise
	1.15]{dehaan2007} we obtain
	\begin{equation*}
		n^{\left|\gamma\right|}\left(\bm R_{n,n}
		- U_{\mathcal{R}}\left(\frac{1}{p}\right)\right)
		= O_\mathbb{P}\left(1\right).
	\end{equation*}
	On the other hand, by combining Lemma~\ref{lemma:neves}, \cite[Theorem
	4.3.1]{dehaan2007}, and \eqref{eq:q} we have
	\begin{equation*}
		n^{\left|\gamma\right|} k^{1/2 - \left|\gamma\right|}
		\left(\hat x_p\left(\bm R\right)
		- U_{\mathcal{R}}\left(\frac{1}{p}\right)\right)
		= O_\mathbb{P}\left(1\right).
	\end{equation*}
	This suggests to use $\{x\in \mathbb{R}^d : \left\|x - \hat\mu\left(\bm
	X\right)\right\|_{\hat\Sigma\left(\bm X\right)} \geq \hat{\bm R}_{n,n}\}$
	instead of $\hat Q_p$ in the case $\gamma \leq -1/2$. 
\end{remark}

\begin{remark}
	\label{remark:p}
	Assumptions of the type~\ref{item:p-usual-1} or~\ref{item:p-usual-2} are
	standard in asymptotic results involving extreme quantile estimators. In
	these, the first part ($np = o(k)$ or $np = O(1)$) means that $p\to 0$ fast,
	while the second part ($\ln\left(np\right) = o\left(\sqrt{k}\right)$ or
	$q_\gamma\left(d_n\right)/\left(d_n^\gamma\sqrt{k}\right)\to 0$) means that
	$p\to 0$ cannot happen too fast, compared to the rate $\sqrt{k}$. Note also
	that \ref{item:p-usual-2} implies~\ref{item:p-usual-1}. Note also that
	$q_\gamma\left(d_n\right)/\left(d_n^\gamma\sqrt{k}\right)\to 0$ can be
	expressed  as 
	\begin{equation*}
		\begin{cases}
			\frac{\ln d_n}{\sqrt{k}} \to 0, & \gamma > 0, \\
			\frac{\left(\ln d_n\right)^2}{\sqrt{k}} \to 0, & \gamma = 0, \\
			d_n^\gamma\sqrt{k}\to\infty, & \gamma < 0,
		\end{cases}
	\end{equation*}
	since 
		\begin{equation} \label{eq:q}
			q_\gamma\left(t\right) \sim
			\begin{cases}
				\frac{1}{\gamma} t^\gamma \ln t, & \gamma > 0, \\
				\frac{1}{2} \left(\ln t\right)^2, & \gamma = 0, \\
				1 / \gamma^2, & \gamma < 0,
			\end{cases}
	\end{equation}
	as $t\to\infty$.

	Assumption~\ref{item:p-slow} ties the convergence rate $\sqrt{n}$ of the
	location and scatter estimators to the rate at which $p$ decays to zero.
	Again, $p\to 0$ cannot happen too fast, compared to the rate $\sqrt{n}$.
	Condition \ref{item:p-slow} is equivalent to
	\begin{equation*}
		\begin{cases}
			\sqrt{\frac{k}{n}} d_n^\gamma = O(1), & \gamma > 0, \\
			\sqrt{\frac{k}{n}} \ln d_n = O(1), & \gamma = 0, \\
			\sqrt{\frac{k}{n}} = O(1), & \gamma < 0.
		\end{cases}
	\end{equation*}
	This means that, in the case $\gamma < 0$, \ref{item:p-slow} follows from
	\ref{item:k}. In the case $\gamma \geq 0$ one has to assume
	both~\ref{item:p-usual-2} and~\ref{item:p-slow}. 
	
	Finally, note that all our results would remain valid with obvious
	modifications if the rate $\sqrt{n}$ of the location and scatter estimators
	in \ref{item:loc-scat} is replaced with any other rate $n^{\alpha}$.
\end{remark}

The next remark states affine equivariance of the estimator $\hat Q_p$, provided
that the corresponding estimators for the location and scatter are affine
equivariant. This fact can be proven similarly as~\cite[Theorem 6]{pere2024a}.
\begin{remark}
	\label{remark:affine}
	Let $A\in\mathbb{R}^{d\times d}$ be an invertible matrix and
	$a\in\mathbb{R}^d$. Let $\bm X = \left(X_1, \ldots, X_n\right)$ be sample of
	$X$, $Z_i = A X_i + a$, and $\bm Z = \left(Z_1, \ldots, Z_n\right)$. If
	$\hat\mu$ and $\hat\Sigma$ are affine equivariant estimators of $\mu$ and
	$\Sigma$, that is,
	\begin{equation*}
		\hat\mu\left(\bm Z\right) = A\hat\mu\left(\bm X\right) + a \quad
		\textnormal{and}\quad
		\hat\Sigma\left(\bm Z\right) = A\hat\Sigma\left(\bm X\right)A^\intercal,
	\end{equation*}
	then 
	\begin{equation*}
		\hat Q_p' = \left\{Ax + b: x\in Q_p\right\},
	\end{equation*}
	where 
	\begin{align*}
		\hat R_i' &= \left\|Z_i - \hat\mu\left(\bm Z\right)\right\|
		_{\hat\Sigma\left(\bm Z\right)}, \\
		\hat{\bm{R}'} &= \left(\hat R_i', \ldots, \hat R_n'\right)
		\quad\textnormal{and} \\
		\hat Q_p' &= \left\{z\in\mathbb{R}^d :
		\left\|z - \hat\mu\left(\bm Z\right)\right\|
		_{\hat\Sigma\left(\bm Z\right)} \geq \hat x_p\left(\hat{\bm R'}\right)
		\right\}.
	\end{align*}
\end{remark}

\bigskip
\acknowledgements{Jaakko Pere gratefully acknowledges support from the Vilho,
Yrj\"o and Kalle V\"ais\"al\"a Foundation. Pauliina Ilmonen gratefully
acknowledges support from the Academy of Finland via the Centre of Excellence in
Randomness and Structures, decision number 346308.}

\bibliographystyle{abbrv}
\bibliography{sources}

\begin{appendices}
	\label{appendix}

	\section{Proofs of Section \ref{sec:general-error}}
	\label{appendix:sec:evt}

	Before proving Theorem~\ref{theorem:moment} and
	Lemma~\ref{lemma:error-behavior}, we review two lemmas that describe the
	behaviors of the functions $U$ and $a$ under the conditions $U\in
	ERV_\gamma$ and $U\in 2ERV_{\gamma, \rho}$ for $\gamma\in\mathbb{R}$ and
	$\rho < 0$. Recall that the function $U$ corresponding to a
	distribution $F$ is slowly varying if
	\begin{equation*}
		\lim_{t\to\infty} \frac{U\left(tx\right)}{U\left(t\right)} = 1
		\quad\forall \, x > 0.
	\end{equation*}

	\begin{lemma}[\cite{dehaan2007}, Lemma 1.2.9]
		\label{lemma:dehaan}
		Suppose $U\in ERV_\gamma$.
		\begin{enumerate}
			\item If $\gamma > 0$, then $U\left(\infty\right) =
			\infty$ and $\lim_{t\to\infty} U\left(t\right) / a\left(t\right) = 1
			/ \gamma$. \label{item:gamma-pos-dehaan}

			\item If $\gamma = 0$, then $U$ is slowly varying and
			$\lim_{t\to\infty} a\left(t\right) / U\left(t\right) = 0$.
			\label{item:gamma-0-dehaan}

			\item If $\gamma < 0$, then $U\left(\infty\right) < 0$ and
			$\lim_{t\to\infty} a\left(t\right) = 0$.
			\label{item:gamma-neg-dehaan}
		\end{enumerate}
	\end{lemma}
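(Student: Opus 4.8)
The plan is to funnel all three cases through the auxiliary function $a$ from~\eqref{eq:erv}: I would first show that $a$ is regularly varying with index $\gamma$ (slowly varying when $\gamma=0$), and then transfer the resulting trichotomy for $a$ to $U$ by means of the integral representation of extended regularly varying functions together with Karamata's theorem.

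\emph{Step 1 ($a\in RV_\gamma$).} Fix $x,y>0$ with $y\neq1$. Applying~\eqref{eq:erv} at the base points $t$ and $tx$,
\begin{equation*}
	\frac{U(txy)-U(tx)}{a(t)}
	=\frac{U(txy)-U(t)}{a(t)}-\frac{U(tx)-U(t)}{a(t)}
	\longrightarrow x^\gamma\,\frac{y^\gamma-1}{\gamma},
\end{equation*}
while, again by~\eqref{eq:erv}, $\bigl(U(txy)-U(tx)\bigr)/a(tx)\to(y^\gamma-1)/\gamma$. Dividing, every subsequential limit $L\in[0,\infty]$ of $a(tx)/a(t)$ must satisfy $x^\gamma(y^\gamma-1)/\gamma=L\,(y^\gamma-1)/\gamma$; since $(y^\gamma-1)/\gamma\neq0$ this forces $L=x^\gamma$, so $a(tx)/a(t)\to x^\gamma$ for all $x>0$, i.e.\ $a\in RV_\gamma$. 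For $\gamma=0$ the identical computation with $\ln(\cdot)$ in place of $((\cdot)^\gamma-1)/\gamma$ gives $a(tx)/a(t)\to1$, so $a$ is slowly varying.

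\emph{Step 2 (transfer to $U$).} By the standard representation of $ERV_\gamma$ functions there are $t_0>1$, a constant $c$, and $\tilde a$ with $\tilde a(t)\sim a(t)$, hence $\tilde a\in RV_\gamma$, such that $U(t)=c+\int_{t_0}^t s^{-1}\tilde a(s)\,\mathrm{d}s$ for $t\geq t_0$ (for $\gamma=0$ possibly plus a further term $\sim a(t)$, which does not affect the conclusions below). If $\gamma>0$, then $\tilde a$ has positive index, so $\tilde a(t)\to\infty$, the integral diverges, and Karamata's theorem gives $\int_{t_0}^t s^{-1}\tilde a(s)\,\mathrm{d}s\sim\tilde a(t)/\gamma$; therefore $U(\infty)=\infty$ and $U(t)/a(t)\to1/\gamma$. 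If $\gamma<0$, then Potter bounds make $s^{-1}\tilde a(s)$ integrable at infinity, so $U(\infty)<\infty$, and the tail form of Karamata's theorem gives $U(\infty)-U(t)=\int_t^\infty s^{-1}\tilde a(s)\,\mathrm{d}s\sim\tilde a(t)/|\gamma|$, whence $a(t)\sim|\gamma|\bigl(U(\infty)-U(t)\bigr)\to0$. If $\gamma=0$, then~\eqref{eq:erv} reads $U(tx)-U(t)=a(t)(\ln x+o(1))$: when $U(\infty)<\infty$ the left side tends to $0$ for fixed $x\neq1$, so $a(t)\to0$, and since $U(\infty)>0$ (the standing positivity assumption) we get $a(t)/U(t)\to0$; when $U(\infty)=\infty$, Karamata's theorem (divergent case, index $0$) gives $\tilde a(t)=o\bigl(\int_{t_0}^t s^{-1}\tilde a(s)\,\mathrm{d}s\bigr)=o\bigl(U(t)\bigr)$, so again $a(t)/U(t)\to0$; in both sub-cases $U(tx)/U(t)=1+\tfrac{a(t)}{U(t)}(\ln x+o(1))\to1$, i.e.\ $U$ is slowly varying.

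The step I expect to cost the most is the sharpening in Step 2 from ``$a\in RV_\gamma$ and~\eqref{eq:erv} hold'' to the precise equivalences $a(t)\sim\gamma U(t)$ (for $\gamma>0$) and $a(t)=o(U(t))$ (for $\gamma=0$, $U(\infty)=\infty$): one must rule out that $U/a$ blows up, and that is exactly what the integral representation together with Karamata's theorem supplies. If one wanted to avoid that representation, the same bounds can be extracted from a dyadic telescoping of $\bigl(U(t)-U(t2^{-n})\bigr)/a(t)$, letting $t\to\infty$ and then $n\to\infty$; but then the a priori boundedness of $U/a$ has to be argued separately, using that the convergence in~\eqref{eq:erv} is locally uniform ($U$ monotone, limit continuous) together with Potter bounds for $a$ — and interchanging the two limits is the delicate point.
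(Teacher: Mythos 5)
The paper never proves this lemma: it is imported as a citation (Lemma 1.2.9 of de Haan and Ferreira, 2006) and used as a black box, so there is no in-paper argument to compare with. Your proposal is, in substance, a reconstruction of the classical textbook route: deduce $a\in RV_\gamma$ from \eqref{eq:erv} (your Step 1 is the standard argument and is correct), then read off the trichotomy for $U$ from a representation of $ERV_\gamma$ functions together with Karamata's theorem. The case analysis is handled correctly, including the use of the standing positivity assumption $U(\infty)>0$ in the $\gamma=0$, $U(\infty)<\infty$ subcase, and you prove $U(\infty)<\infty$ for $\gamma<0$, which is the intended statement (the ``$U(\infty)<0$'' in the lemma as printed is evidently a typo, and $U(\infty)<\infty$ is what the paper actually uses later).

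One step is overstated: the representation in Step 2 cannot hold literally with an exact constant $c$ and no error term. Here $U$ is a left-continuous generalized inverse and may have jumps, whereas $c+\int_{t_0}^t s^{-1}\tilde a(s)\,\mathrm{d}s$ is continuous in $t$; moreover, even allowing a term $c(t)\to c$ is not enough when $\gamma>0$ (a nondecreasing step function asymptotic to $t^{\gamma}$, e.g.\ the integer part of $t^{\gamma}$, lies in $ERV_\gamma$, yet its difference from any such integral has unit jumps and so cannot converge). The correct representation theorems (de Haan--Ferreira, Appendix B; de Haan's representation of the class $\Pi$ when $\gamma=0$) carry an additional slack term of size $o(a(t))$ --- equivalently $c(t)=c+o(a(t))$ --- for every $\gamma$, not only for $\gamma=0$, and your parenthetical ``a further term $\sim a(t)$'' is not the right description of it. Fortunately this does not damage the argument: for $\gamma>0$ the slack is $o(a(t))$ with $a(t)\to\infty$ while Karamata gives the main term $\sim \tilde a(t)/\gamma$; for $\gamma<0$ the slack is $o(a(t))$ against the main term $\sim a(t)/\lvert\gamma\rvert$ (and $a(t)\to 0$ already follows from $a\in RV_\gamma$ with $\gamma<0$, without any representation); for $\gamma=0$ with $U(\infty)=\infty$ the slack is $o(a(t))=o(U(t))$. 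So the proof is correct once the representation is quoted in its proper form; be aware, though, that you are then invoking machinery (Karamata plus the $ERV$/$\Pi$ representation) of essentially the same depth as the lemma itself, which is acceptable here only because the paper treats the lemma as a known, cited result in the first place.
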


	\begin{lemma}[\cite{neves2009}, Lemma 3] \label{lemma:neves}
		Suppose $U\in 2ERV_{\gamma, \rho}$ with $\gamma\in\mathbb{R}$ and $\rho
		< 0$. Then $\lim_{t\to\infty} t^{-\gamma} a(t) = c\in (0, \infty)$.
	\end{lemma}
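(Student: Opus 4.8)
The plan is to reduce the statement to a convergence property of the slowly varying part of $a$, and then to extract that convergence from the second-order condition; the hypothesis $\rho<0$ enters decisively only at the last step.

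\emph{Step 1 (reduction).} From $U\in 2ERV_{\gamma,\rho}\subseteq ERV_{\gamma}$ it follows that the scaling function $a$ in \eqref{eq:erv} is regularly varying with index $\gamma$ (a standard consequence of \eqref{eq:erv}, obtained by evaluating it at two values of $x$; see \cite{dehaan2007}). Write $a(t)=t^{\gamma}\ell(t)$ with $\ell$ positive and slowly varying. Then the assertion $t^{-\gamma}a(t)\to c\in(0,\infty)$ is equivalent to saying that $\ell$ has a finite positive limit. (For $\gamma=0$ we have $\ell=a$, which is slowly varying by Lemma~\ref{lemma:dehaan}.)

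\emph{Step 2 (a second-order condition for $a$).} For $x,y>0$, decompose
\begin{equation*}
  \frac{U(txy)-U(t)}{a(t)}
  =\frac{a(tx)}{a(t)}\cdot\frac{U(txy)-U(tx)}{a(tx)}+\frac{U(tx)-U(t)}{a(t)},
\end{equation*}
and substitute into each of the three difference quotients the expansion $\frac{U(ts)-U(t)}{a(t)}=\frac{s^{\gamma}-1}{\gamma}+A(t)H_{\gamma,\rho}(s)+o(A(t))$ coming from \eqref{eq:2erv}, using in addition that $A$ is regularly varying with index $\rho$, i.e.\ $A(tx)/A(t)\to x^{\rho}$ (a standard feature of the second-order condition, see \cite[Appendix~B]{dehaan2007}). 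Matching the $O(1)$ terms reconfirms $a(tx)/a(t)\to x^{\gamma}$; matching the $O(A(t))$ terms, and cancelling using the explicit form of $H_{\gamma,\rho}$, gives
\begin{equation*}
  \lim_{t\to\infty}\frac{\ell(tx)/\ell(t)-1}{A(t)}=\frac{x^{\rho}-1}{\rho}
  \qquad\text{for every }x>0,
\end{equation*}
so $\ell$ is of second-order regular variation with indices $(0,\rho)$ and auxiliary function $A$.

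\emph{Step 3 (convergence of $\ell$).} Integrate the relation of Step 2 in the scale variable: put $m(t)=\int_{t}^{2t}\ell(u)\,u^{-1}\,\mathrm{d}u$. By the (locally uniform) second-order asymptotics for $\ell$ one obtains $m(t)=\ell(t)\bigl(\ln 2+O(A(t))\bigr)$ and $\ell(2t)-\ell(t)=\ell(t)A(t)\bigl(\tfrac{2^{\rho}-1}{\rho}+o(1)\bigr)$, from which $m$ is locally absolutely continuous with $(\ln m)'(t)=t^{-1}A(t)\,c_{\rho}\,(1+o(1))$ for a nonzero constant $c_{\rho}$. Since $|A|$ is regularly varying with index $\rho<0$, Karamata's theorem yields $\int^{\infty}|A(u)|\,u^{-1}\,\mathrm{d}u<\infty$; hence $\ln m(t)$ converges, so $m(t)\to m_{\infty}\in(0,\infty)$, whence $\ell(t)=\tfrac{1}{\ln 2}\,m(t)\,(1+o(1))\to m_{\infty}/\ln 2\in(0,\infty)$. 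Equivalently, $t^{-\gamma}a(t)\to c\in(0,\infty)$, which is the claim. (Alternatively, one invokes the Karamata representation of $\ell$ with error term $O(|A|)$ and integrates it directly.)

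\emph{Main obstacle.} The crux is Step 3, and specifically the role of the assumption $\rho<0$: it is exactly the condition under which $\int^{\infty}|A(u)|u^{-1}\,\mathrm{d}u<\infty$, which is what upgrades ``slowly varying'' to ``convergent''. For $\rho=0$ the conclusion genuinely fails — e.g.\ $\ell(t)=\ln t$ exhibits the second-order behaviour of Step 2 with $\rho=0$ yet has no limit — so one cannot avoid a quantitative, summable bound on $A$. Step 2 is routine bookkeeping; its only subtlety is the simultaneous use of \eqref{eq:2erv} at the arguments $t$, $tx$, $txy$ together with the regular variation of $A$.
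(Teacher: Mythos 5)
The paper does not prove this lemma at all: it is imported directly from Neves (\cite{neves2009}, Lemma 3), so there is no in-paper argument to compare against. Judged on its own, your proof is correct and follows what is essentially the standard route behind the cited result: first upgrade the second-order condition \eqref{eq:2erv} for $U$ to a second-order relation for the scale function, $\lim_{t\to\infty}\bigl(a(tx)/a(t)-x^{\gamma}\bigr)/A(t)=x^{\gamma}\frac{x^{\rho}-1}{\rho}$ (equivalently your relation for $\ell$), using the decomposition over the arguments $t$, $tx$, $txy$ together with $a(tx)/a(t)\to x^{\gamma}$ and $|A|\in RV_{\rho}$ (both standard under \eqref{eq:2erv}, see \cite{dehaan2007}); then exploit $\rho<0$ to make $A$ integrable against $\mathrm{d}t/t$, which upgrades the slowly varying factor $\ell$ from ``slowly varying'' to ``convergent with positive finite limit''. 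Your discussion of why $\rho=0$ breaks the conclusion correctly isolates where negativity of $\rho$ is indispensable.

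Two points of presentation, neither a genuine gap. First, in Step 2 ``matching the $O(A(t))$ terms'' should be phrased as solving the decomposition for the single unknown term $\bigl(a(tx)/a(t)-x^{\gamma}\bigr)\frac{y^{\gamma}-1}{\gamma}$ at one fixed $y$ (say $y=2$, for which $\frac{y^{\gamma}-1}{\gamma}\neq 0$ for every $\gamma$): the limit of that term exists because every other term in the identity converges, and the explicit computation with $H_{\gamma,\rho}$, using $\frac{(xy)^{\gamma}-1}{\gamma}=\frac{x^{\gamma}-1}{\gamma}+x^{\gamma}\frac{y^{\gamma}-1}{\gamma}$, then yields $x^{\gamma}\frac{x^{\rho}-1}{\rho}$; matching alone does not by itself establish existence. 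Second, the differentiation argument in Step 3 tacitly uses measurability and local integrability of $a$, and invokes more uniformity than needed. A cleaner finish: apply Step 2 with $x=2$ to get $\ell(2^{j+1})/\ell(2^{j})=1+O\bigl(|A(2^{j})|\bigr)$, note that $|A|\in RV_{\rho}$ with $\rho<0$ gives $\sum_{j}|A(2^{j})|<\infty$ (geometric decay via Potter bounds), so $\ell(2^{j})$ converges in $(0,\infty)$, and then the uniform convergence theorem for slowly varying functions extends this to $\ell(t)\to c$, i.e.\ $t^{-\gamma}a(t)\to c\in(0,\infty)$. In your own version, only $m(t)=\ell(t)(\ln 2+o(1))$ is needed, which already follows from first-order uniform convergence; the $O(A(t))$ refinement is superfluous.
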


	We next prove our main result, Theorem \ref{theorem:moment}.
	\begin{proof}[Proof of Theorem \ref{theorem:moment}]
		Throughout the proof, let $U = U_Y$ and $a = a_Y$. First, let us show
		that~\eqref{eq:moment-error} holds. Similarly as in the proof
		of~\cite[Theorem 2]{virta2024}, one obtains
		\begin{align}
			\left|\hat\gamma_+\left(\hat{\bm Y}\right) - \hat\gamma_+
			\left(\bm Y\right)\right|
			&= O_\mathbb{P}\left(h_n\right), \label{eq:hill-error-rate} \\
			\left|\frac{\left(M_n^{(1)}\left(\hat{\bm
			Y}\right)\right)^2}{M_n^{(2)}\left(\hat{\bm Y}\right)}
			- \frac{\left(M_n^{(1)}\left(\bm
			Y\right)\right)^2}{M_n^{(2)}\left(\bm Y\right)}\right|
			&= O_\mathbb{P}\left(\frac{h_n}{M_n^{(1)}\left(\bm Y\right)}
			\right), \label{eq:ab-error-rate} \\
			\left(1 - \frac{\left(M_n^{(1)}\left(\bm
			Y\right)\right)^2}{M_n^{(2)}\left(\bm Y\right)}\right)^{-1}
			&= O_\mathbb{P}\left(1\right),
			\quad\textnormal{and} \label{eq:a-bounded} \\
			\left(1 - \frac{\left(M_n^{(1)}\left(\hat{\bm
			Y}\right)\right)^2}{M_n^{(2)}\left(\hat{\bm Y}\right)}\right)^{-1}
			&= O_\mathbb{P}\left(1\right). \label{eq:b-bounded}
		\end{align}
		Additionally, by the continuous mapping theorem and~\cite[Lemma
		3.5.1]{dehaan2007}, we have
		\begin{equation} \label{eq:rate-hill}
			\left(M_n^{(1)}\left(\bm Y\right)\right)^{-1}
			= O_\mathbb{P}\left(\frac{U\left(\frac{n}{k}\right)}
			{a\left(\frac{n}{k}\right)}\right).
		\end{equation}
		Now, using
		\begin{equation*}
			\left(1 - x\right)^{-1} - \left(1 - y\right)^{-1}
			= \left(x - y\right)\left(1 - x\right)^{-1}\left(1 - y\right)^{-1},
			\quad x,y\in(0,1),
		\end{equation*}
		with $x = \frac{\left(M_n^{(1)}\left(\bm
		Y\right)\right)^2}{M_n^{(2)}\left(\bm Y\right)}$ and $y =
		\frac{\left(M_n^{(1)}\left(\hat{\bm
		Y}\right)\right)^2}{M_n^{(2)}\left(\hat{\bm Y}\right)}$, it follows
		from~\eqref{eq:ab-error-rate}--\eqref{eq:rate-hill} that
		\begin{equation} \label{eq:gamma-minus-error-rate}
			\left|\hat\gamma_-\left(\hat{\bm Y}\right)
			- \hat\gamma_-\left(\bm Y\right)\right|
			= O_\mathbb{P}\left(\frac{h_nU\left(\frac{n}{k}\right)}
			{a\left(\frac{n}{k}\right)}\right).
		\end{equation}
		By Lemma~\ref{lemma:dehaan}, we have
		\begin{equation*} 
			\lim_{n\to\infty}\frac{U\left(\frac{n}{k}\right)}
			{a\left(\frac{n}{k}\right)} =
			\begin{cases}
				\frac{1}{\gamma}, &\gamma > 0 \\
				\infty, &\gamma \leq 0
			\end{cases}.
		\end{equation*}
		Hence, combining~\eqref{eq:hill-error-rate}
		and~\eqref{eq:gamma-minus-error-rate} gives
		\begin{equation*}
			\left|\hat\gamma_M\left(\hat{\bm Y}\right)
			- \hat\gamma_M\left(\bm Y\right)\right|
			= O_\mathbb{P}\left(\frac{h_nU\left(\frac{n}{k}\right)}
			{a\left(\frac{n}{k}\right)}\right),
		\end{equation*}
		proving \eqref{eq:moment-error}. 
		
		Let us next prove~\eqref{eq:order-stat-error}. By recalling $\bm
		Y_{n-k,n} \stackrel{\mathcal{D}}{=}U\left(\bm Z_{n-k,n}\right)$, where
		$\bm Z$ is an i.i.d.\ sample from the standard Pareto distribution
		$F_Z\left(z\right) = \mathbbm{1}_{[1, \infty)}(z)\left(1 - 1/z\right)$,
		it follows from~\cite[Corollary 2.2.2 and Theorem B.2.18]{dehaan2007}
		that $\frac{\bm Y_{n-k,n} - U\left(\frac{n}{k}\right)}
		{a\left(\frac{n}{k}\right)}\stackrel{\mathbb{P}}{\to} 0$. Now 
		\begin{equation*}
			\left|\frac{\bm Y_{n-k,n}}{a\left(\frac{n}{k}\right)}\right|
			\leq \left|\frac{\bm Y_{n-k,n} - U\left(\frac{n}{k}\right)}
			{a\left(\frac{n}{k}\right)}\right|
			+ \left|\frac{U\left(\frac{n}{k}\right)}
			{a\left(\frac{n}{k}\right)}\right|
			= o_\mathbb{P}\left(1\right)
			+ \left|\frac{U\left(\frac{n}{k}\right)}
			{a\left(\frac{n}{k}\right)}\right|
			= O_\mathbb{P}\left(\frac{U\left(\frac{n}{k}\right)}
			{a\left(\frac{n}{k}\right)}\right),
		\end{equation*}
	 	and using \eqref{eq:error-cond} leads to
		\begin{equation*}
			\left|\frac{\hat{\bm Y}_{n-k,n} - \bm Y_{n-k,n}}
			{a\left(\frac{n}{k}\right)}\right|
			= \left|\frac{\hat{\bm Y}_{n-k,n}}{\bm Y_{n-k,n}} - 1\right|
			\left|\frac{\bm Y_{n-k,n}}{a\left(\frac{n}{k}\right)}\right|
			= O_\mathbb{P}\left(\frac{h_nU\left(\frac{n}{k}\right)}
			{a\left(\frac{n}{k}\right)}\right),
		\end{equation*}
		proving \eqref{eq:order-stat-error}.

		Let us next prove~\eqref{eq:a-error}. It follows from~\cite[Lemma
		3.5.1]{dehaan2007} that $1 - \hat\gamma_-\left(\bm Y\right) =
		O_\mathbb{P}\left(1\right)$ and that $M_n^{(1)}\left(\bm Y\right) =
		O_\mathbb{P}\left(\frac{a\left(\frac{n}{k}\right)}
		{U\left(\frac{n}{k}\right)}\right)$, which together
		with~\eqref{eq:hill-error-rate} gives 
		\begin{equation*}
			M_n^{(1)}\left(\hat{\bm Y}\right) =
			O_\mathbb{P}\left(\frac{a\left(\frac{n}{k}\right)}
			{U\left(\frac{n}{k}\right)}\right).
		\end{equation*}
		Together with \eqref{eq:gamma-minus-error-rate} and the assumption
		$\frac{h_nU\left(\frac{n}{k}\right)}{a\left(\frac{n}{k}\right)} = o(1)$,
		this gives
		\begin{equation*}
		\hat Z_n =
		O_\mathbb{P}\left(\frac{a\left(\frac{n}{k}\right)}
		{U\left(\frac{n}{k}\right)}\right)
		\end{equation*}
		for $\hat Z_n = M_n^{(1)}\left(\hat{\bm Y}\right) \left(1 -
		\hat\gamma_-\left(\hat{\bm Y}\right)\right)$. Let $Z_n =
		M_n^{(1)}\left(\bm Y\right) \left(1 - \hat\gamma_-\left(\bm
		Y\right)\right)$. Now 
		\begin{equation*}
			\begin{split}
				\left|\hat Z_n - Z_n\right|
				&\leq \left|M_n^{(1)}\left(\hat{\bm Y}\right)\right|
				\left|\hat\gamma_-\left(\hat{\bm Y}\right)
				- \hat\gamma_-\left(\bm Y\right)\right|
				+ \left|1 - \hat\gamma_-\left(\bm Y\right)\right|
				\left|M_n^{(1)}\left(\hat{\bm Y}\right)
				- M_n^{(1)}\left(\bm Y\right)\right| \\
				&= O_\mathbb{P}\left(\frac{a\left(\frac{n}{k}\right)}
				{U\left(\frac{n}{k}\right)}\right)
				O_\mathbb{P}\left(\frac{h_nU\left(\frac{n}{k}\right)}
				{a\left(\frac{n}{k}\right)}\right)
				+ O_\mathbb{P}\left(1\right)O_\mathbb{P}\left(h_n\right)
				= O_\mathbb{P}\left(h_n\right),
			\end{split}
		\end{equation*}
		and consequently,
		\begin{equation*}
			\begin{split}
				\left|\frac{\hat\sigma_M\left(\hat{\bm Y}\right) -
				\hat\sigma_M\left(\bm Y\right)}{a\left(\frac{n}{k}\right)}
				\right|
				&\leq \left|\hat Z_n\right|
				\left|\frac{\hat{\bm Y}_{n-k, n} - \bm Y_{n-k,n}}
				{a\left(\frac{n}{k}\right)}\right|
				+ \left|\frac{\bm Y_{n-k,n}}{a\left(\frac{n}{k}\right)}\right|
				\left|\hat Z_n - Z_n\right| \\
				&= O_\mathbb{P}\left(h_n\right)
				+ O_\mathbb{P}\left(\frac{h_nU\left(\frac{n}{k}\right)}
				{a\left(\frac{n}{k}\right)}\right)
				= O_\mathbb{P}\left(\frac{h_nU\left(\frac{n}{k}\right)}
				{a\left(\frac{n}{k}\right)}\right).
			\end{split}
		\end{equation*}
 		This proves~\eqref{eq:a-error}. 

		It remains to prove that~\eqref{eq:q-error} holds under the additional
		Assumptions~\ref{item:p-usual-1} and~\ref{item:gamma-bounded}. To
		simplify the notation, denote $\hat\gamma = \hat\gamma_M\left(\hat{\bm
		Y}\right)$, $\hat\sigma = \hat\sigma_M\left(\hat{\bm Y}\right)$,
		$\tilde\gamma = \hat\gamma_M\left(\bm Y\right)$ and $\tilde\sigma =
		\hat\sigma_M\left(\bm Y\right)$. We write
		\begin{equation*}
			\left|\hat x_p\left(\hat{\bm Y}\right) - \hat x_p\left(\bm Y\right)
			\right| \leq
			\underbrace{\left|\hat{\bm Y}_{n-k, n} - \bm Y_{n-k, n}\right|}
			_{\textnormal{I}}
			+ \underbrace{|\hat\sigma|\left| \frac{d_n^{\hat\gamma} - 1}
			{\hat\gamma} -  \frac{d_n^{\tilde\gamma} - 1}
			{\tilde\gamma}\right|}_{\textnormal{II}}
			+ \underbrace{|\hat\sigma - \tilde\sigma|\left|
			\frac{d_n^{\tilde\gamma} - 1}
			{\tilde\gamma} \right|}_{\textnormal{III}}
		\end{equation*}
		and bound the terms one by one.
		Using~\eqref{eq:order-stat-error} and~\eqref{eq:q} gives 
		\begin{equation*} 
			\textnormal{I} = O_\mathbb{P}\left(h_n U\left(\frac{n}{k}\right)
			\right)
			= O_\mathbb{P}\left(h_n U\left(\frac{n}{k}\right)
			q_\gamma\left(d_n\right)\right).
		\end{equation*}
		Consider next II. Using the bound $\left|\hat\sigma\right| \leq
		a\left(n/k\right)\left(\left|\frac{\hat\sigma -
		\tilde\sigma}{a\left(n/k\right)}\right| +
		\left|\frac{\tilde\sigma}{a\left(n/k\right)}\right|\right)$,
		\eqref{eq:a-error}, and~\cite[Theorem 4.2.1]{dehaan2007} gives
		$\left|\hat\sigma\right| = O_\mathbb{P}\left(a\left(\frac{n}{k}
		\right)\right)$ bounding the first factor in II. For the second factor
		in II, note first that \eqref{eq:moment-error} and
		\ref{item:gamma-bounded} imply $\sqrt{k}\left(\hat\gamma -
		\tilde\gamma\right) = O_\mathbb{P}\left(1\right)$. Moreover, by
		\ref{item:p-usual-1} we get 	$\left|\left(\hat\gamma -
		\tilde\gamma\right)\ln s\right| \leq \left|\sqrt{k}\left(\hat\gamma -
		\tilde\gamma\right)\right| \frac{\ln d_n}{\sqrt{k}} =
		o_\mathbb{P}\left(1\right)$ for any $1\leq s \leq d_n$. These imply 
		\begin{equation*}
			\sup_{1\leq s\leq d_n}
			\left|\frac{e^{\left(\hat\gamma - \tilde\gamma\right) \ln s} - 1}
			{\left(\hat\gamma - \tilde\gamma\right) \ln s}\right|
			\stackrel{\mathbb{P}}{\to} 1, \quad n\to\infty.
		\end{equation*}
		Together with
		$\frac{q_{\tilde\gamma}\left(d_n\right)}{q_\gamma\left(d_n\right)}
		\stackrel{\mathbb{P}}{\to} 1$ by \ref{item:p-usual-1}
		and~\ref{item:gamma-bounded} (see the proof of \cite[Corollary
		4.3.2]{dehaan2007}), this leads to
		\begin{equation*}
			\begin{split}
				\left|\frac{d_n^{\hat\gamma} - 1}{\hat\gamma}
				- \frac{d_n^{\tilde\gamma} - 1}{\tilde\gamma}\right|
				&= \left|\left(\hat\gamma - \tilde\gamma\right) \int_1^{d_n}
				s^{\tilde\gamma - 1}
				\frac{e^{\left(\hat\gamma - \tilde\gamma\right)\ln s} - 1}
				{\left(\hat\gamma - \tilde\gamma\right)\ln s}\ln s
				\,\mathrm{d}s\right| \\
				&\leq \left|\hat\gamma - \tilde\gamma\right|
				\sup_{1\leq s\leq d_n}
				\left|\frac{e^{\left(\hat\gamma - \tilde\gamma\right) \ln s}
					- 1}{\left(\hat\gamma - \tilde\gamma\right) \ln s}\right|
				q_{\tilde\gamma}\left(d_n\right) \\
				&= O_\mathbb{P}\left(\frac{h_nU\left(\frac{n}{k}\right)}
				{a\left(\frac{n}{k}\right)} q_\gamma\left(d_n\right)\right).
			\end{split}
		\end{equation*}
		Hence we have 
		\begin{equation*}
			\textnormal{II} = O_\mathbb{P}\left(h_n U\left(\frac{n}{k}\right)
			q_\gamma\left(d_n\right)\right).
		\end{equation*}
		Consider next III. As above, one can show that,
		under~\ref{item:p-usual-1} and~\ref{item:gamma-bounded}, we have 
			\begin{equation*}
			\left|\frac{d_n^{\tilde\gamma} - 1}{\tilde\gamma}
			- \frac{d_n^\gamma - 1}{\gamma}\right|
			= o_\mathbb{P}\left(q_\gamma\left(d_n\right)\right).
		\end{equation*}
		Now \eqref{eq:q} implies that $\left|\frac{d_n^\gamma -
		1}{\gamma}\right| = O\left(q_\gamma\left(d_n\right)\right)$, and using
		\eqref{eq:a-error} gives
			\begin{equation*} 
			\textnormal{III} \leq \left|\hat\sigma - \tilde\sigma\right|
			\left(\left|\frac{d_n^{\tilde\gamma} - 1}{\tilde\gamma}
			- \frac{d_n^\gamma - 1}{\gamma}\right|
			+ \left|\frac{d_n^\gamma - 1}{\gamma}\right|\right)
			= O_\mathbb{P}\left(h_n U\left(\frac{n}{k}\right)
			q_\gamma\left(d_n\right)\right).
		\end{equation*}
		This completes the proof.
	\end{proof}

	\begin{proof}[Proof of Lemma \ref{lemma:error-behavior}]
		By Part~\ref{item:gamma-pos-dehaan} of Lemma~\ref{lemma:dehaan} we have
		$\sqrt{k} z_n = O\left(\sqrt{k} h_n\right) = o\left(1\right)$, proving
		the case $\gamma>0$. Let next $\gamma=0$ and let $\delta > 0$. By
		Part~\ref{item:gamma-0-dehaan} of Lemma~\ref{lemma:dehaan}, $U$ is
		slowly varying, and   
		\cite[Theorem B.1.6]{dehaan2007} gives that $t^{-\delta}U(t)\to 0.$
		Additionally, Lemma~\ref{lemma:neves} gives  $a\left(t\right)\to c\in
		(0, \infty)$. Thus, $\sqrt{k} z_n = O\left(\sqrt{k} h_n
		U\left(n/k\right)\right) = O\left(\left(n/k\right)^{-\delta}
		U\left(n/k\right)\right) = o\left(1\right)$. Let now $\gamma<0$. By
		Part~\ref{item:gamma-neg-dehaan} of Lemma~\ref{lemma:dehaan},
		$U(\infty)<\infty$, and by Lemma~\ref{lemma:neves}, $\sqrt{k} z_n =
		O\left(\left(k/n\right)^\gamma \sqrt{k} h_n\right) = o\left(1\right)$.
		This completes the proof.
	\end{proof}

	\section{Proofs of Section \ref{sec:elliptical}}
	\label{appendix:sec:applications}
	In this section we present the proof of Theorem
	\ref{theorem:Q-hat-consistent}. For that, we give two auxiliary lemmas that
	are related to the effect of replacing true observations of the generating
	variate with approximations based on estimated location and scatter.

	Recall that $\|u\|_A = \sqrt{u^\intercal A^{-1} u}$. For the standard
	Euclidean norm, we use the notation $\|u\|$. For matrices, we use the
	induced norm defined by 
	\begin{equation*}
		\|B\| = \sup\left\{\frac{\|Bx\|}{\|x\|}
		: x\in\mathbb{R}^d\setminus\{0\}\right\},
		\quad B\in\mathbb{R}^{d\times d}.
	\end{equation*}
	We also recall the following basic inequalities 
	\begin{equation*}
		\|Bu\| \leq \|B\|\|u\|, \quad B\in\mathbb{R}^{d\times d},
		\ u\in\mathbb{R}^d,
	\end{equation*}
	and 
	\begin{equation*}
		\|BC\| \leq \|B\|\|C\|, \quad B,C\in\mathbb{R}^{d\times d}.
	\end{equation*}

	\begin{lemma} \label{lemma:gvariate-rate} Let $X$ be a $d$-variate
		elliptically distributed random variable with the location vector $\mu$,
		the scatter matrix $\Sigma$, and an absolutely continuous generating
		variate $\mathcal{R}$. Let $\bm X = \left(X_1, \ldots, X_n\right)$ be an
		i.i.d.\ sample from $X$ and suppose that the estimators $\hat\mu_n =
		\hat\mu\left(\bm X\right)$ and $\hat\Sigma_n = \hat\Sigma\left(\bm
		X\right)$ satisfy $\sqrt{n}\left(\hat\mu_n - \mu\right) =
		O_\mathbb{P}\left(1\right)$ and $\sqrt{n}\left(\hat\Sigma_n -
		\Sigma\right) = O_\mathbb{P}\left(1\right)$. Suppose further that
		$\hat\Sigma_n$ are symmetric positive definite and let $R$ and $\hat R$
		be as in~\eqref{eq:r}. Then
		\begin{equation*}
			\sqrt{n}\left(\hat R - R\right) = O_\mathbb{P}\left(1\right).
		\end{equation*}
	\end{lemma}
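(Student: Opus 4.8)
The plan is to first control the squared quantities $R^2$ and $\hat R^2$, which are quadratic forms and therefore more tractable, and then transfer the bound to $\hat R - R$ via the identity $\hat R - R = (\hat R^2 - R^2)/(\hat R + R)$. Write $Z = X - \mu$, so that $R^2 = Z^\intercal \Sigma^{-1} Z$ and, with $\hat Z = X - \hat\mu_n = Z - (\hat\mu_n - \mu)$, also $\hat R^2 = \hat Z^\intercal \hat\Sigma_n^{-1}\hat Z$. Since $X$ is a fixed random vector, $\|Z\| = O_\mathbb{P}(1)$; by hypothesis $\|\hat\mu_n - \mu\| = O_\mathbb{P}(n^{-1/2})$ and $\|\hat\Sigma_n - \Sigma\| = O_\mathbb{P}(n^{-1/2})$, the latter giving in particular $\hat\Sigma_n\stackrel{\mathbb{P}}{\to}\Sigma$. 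As $\hat\Sigma_n$ is symmetric positive definite, $\|\hat\Sigma_n^{-1}\| = 1/\lambda_{\min}(\hat\Sigma_n)$, and continuity of the smallest eigenvalue together with the continuous mapping theorem gives $\lambda_{\min}(\hat\Sigma_n)\stackrel{\mathbb{P}}{\to}\lambda_{\min}(\Sigma) > 0$, so $\|\hat\Sigma_n^{-1}\| = O_\mathbb{P}(1)$. Using the perturbation identity $\hat\Sigma_n^{-1} - \Sigma^{-1} = -\hat\Sigma_n^{-1}(\hat\Sigma_n - \Sigma)\Sigma^{-1}$ and submultiplicativity of the operator norm then yields $\|\hat\Sigma_n^{-1} - \Sigma^{-1}\| = O_\mathbb{P}(n^{-1/2})$.

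Next I would estimate $\hat R^2 - R^2$ through the decomposition
\[
\hat R^2 - R^2 = \bigl(\hat Z^\intercal\hat\Sigma_n^{-1}\hat Z - Z^\intercal\hat\Sigma_n^{-1}Z\bigr) + Z^\intercal\bigl(\hat\Sigma_n^{-1} - \Sigma^{-1}\bigr)Z .
\]
Since $\hat Z - Z = -(\hat\mu_n - \mu)$ and $\hat\Sigma_n^{-1}$ is symmetric, the first bracket equals $-(\hat\mu_n - \mu)^\intercal\hat\Sigma_n^{-1}\bigl(2Z - (\hat\mu_n - \mu)\bigr)$, whose modulus is at most $\|\hat\mu_n - \mu\|\,\|\hat\Sigma_n^{-1}\|\,\bigl(2\|Z\| + \|\hat\mu_n - \mu\|\bigr) = O_\mathbb{P}(n^{-1/2})$, while the second term is bounded by $\|\hat\Sigma_n^{-1} - \Sigma^{-1}\|\,\|Z\|^2 = O_\mathbb{P}(n^{-1/2})$. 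Hence $\sqrt{n}\,(\hat R^2 - R^2) = O_\mathbb{P}(1)$. Note that no independence between $X$ and $(\hat\mu_n,\hat\Sigma_n)$ is needed here: these are pathwise inequalities combined with marginal tightness of each factor, and uniform tightness is preserved under finite sums and products.

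The main obstacle is the denominator in $\hat R - R = (\hat R^2 - R^2)/(\hat R + R)$, which could a priori be arbitrarily small. This is resolved by the standing assumption that $\mathcal{R}$ is absolutely continuous: since $R\stackrel{\mathcal{D}}{=}\mathcal{R}$, we get $\mathbb{P}(R = 0) = 0$, so $R^{-1}$ is an almost surely finite random variable and hence $R^{-1} = O_\mathbb{P}(1)$; as $\hat R \ge 0$ we have $0\le (\hat R + R)^{-1}\le R^{-1}$ for every $n$, so $(\hat R + R)^{-1} = O_\mathbb{P}(1)$ as well. Multiplying the two tight factors gives
\[
\sqrt{n}\,(\hat R - R) = \bigl(\sqrt{n}\,(\hat R^2 - R^2)\bigr)\,(\hat R + R)^{-1} = O_\mathbb{P}(1),
\]
which is the assertion. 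The remaining bookkeeping — checking that each displayed $O_\mathbb{P}$ bound really follows from the product and sum rules for $O_\mathbb{P}$ in the sense defined in Section~\ref{sec:preliminaries} — is routine.
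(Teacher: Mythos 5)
Your proposal is correct and follows essentially the same route as the paper: bound $\sqrt{n}\left(\hat R^2 - R^2\right)$ by a two-term telescoping decomposition (one term for the change of center, one for the change of matrix), using the $\sqrt{n}$-rates of $\hat\mu_n$ and $\hat\Sigma_n$ together with tightness of $\left\|\hat\Sigma_n^{-1}\right\|$, and then pass to $\hat R - R$ via $\left(\hat R + R\right)^{-1}\leq R^{-1} = O_\mathbb{P}(1)$ since $R>0$ almost surely. The only differences are cosmetic (you telescope in the opposite order and expand the quadratic form where the paper factors the norm difference), and you spell out the step $\left\|\hat\Sigma_n^{-1}\right\| = O_\mathbb{P}(1)$ that the paper leaves implicit.
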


	\begin{proof}
		Since $\hat R\geq 0$ and $R > 0$ almost surely, we have $\left|\hat R -
		R\right| = \frac{\left|\hat R^2 - R^2\right|}{\left|\hat R + R\right|}
		\leq \frac{\left|\hat R^2 - R^2\right|}{R}$ almost surely as well. Thus
		it suffices to show that $\sqrt{n}\left|\hat R^2 - R^2\right| =
		O_\mathbb{P}\left(1\right)$. It follows from \eqref{eq:r} and triangle
		inequality that
		\begin{equation*}
			\left|\hat R^2 - R^2\right|
			\leq \underbrace{\left|\left\|X - \hat\mu_n\right\|_{\hat\Sigma_n}^2
			- \left\|X - \hat\mu_n\right\|_{\Sigma}^2\right|}_{\textnormal{I}}
			+ \underbrace{\left|\left\|X - \hat\mu_n\right\|_{\Sigma}^2
			- \left\|X - \mu\right\|_{\Sigma}^2\right|}_{\textnormal{II}}.
		\end{equation*}
		Now, consistency of $\hat\Sigma_n$ gives $\left\|\hat\Sigma_n^{-1} -
		\Sigma^{-1}\right\|\leq
		\left\|\hat\Sigma_n^{-1}\right\|\left\|\Sigma^{-1}\right\|
		\left\|\hat\Sigma_n - \Sigma\right\| =
		O_\mathbb{P}\left(\frac{1}{\sqrt{n}}\right)$, and consistency of
		$\hat\mu_n$ gives $\|X - \hat\mu_n\| = O_\mathbb{P}\left(1\right)$.
		Thus, by Cauchy--Schwarz inequality,
				\begin{equation*}
				\textnormal{I} = \left|\left(X - \hat\mu_n\right)^{\intercal}
				\left(\hat\Sigma_n^{-1} - \Sigma^{-1}\right)
				\left(X - \hat\mu_n\right)\right|
				\leq \left\|X-\hat\mu_n\right\|^2
				\left\|\hat\Sigma_n^{-1} - \Sigma^{-1}\right\|
				= O_\mathbb{P}\left(\frac{1}{\sqrt{n}}\right).
			\end{equation*}
		For Term II, reverse triangle inequality and equivalence of norms gives 
		\begin{equation*}
			\begin{split}
				\textnormal{II} &= \left|\left\|X - \hat\mu_n\right\|_\Sigma
				- \left\|X - \mu\right\|_\Sigma\right|
				\left|\left\|X - \hat\mu_n\right\|_\Sigma
				+ \left\|X - \mu\right\|_\Sigma\right| \\
				&\leq \left\|\hat\mu_n - \mu\right\|_\Sigma
				O_\mathbb{P}\left(1\right)
				= O_\mathbb{P}\left(\frac{1}{\sqrt{n}}\right).
			\end{split}
		\end{equation*}
		This completes the proof.
	\end{proof}

	The following lemma is a reformulation of~\cite[Lemma 2.2]{heikkila2019}.
	For the reader's convenience, we present the proof.
	\begin{lemma} \label{lemma:gvariate-error} Let $X$ be a $d$-variate
		elliptically distributed random variable with the location vector $\mu$,
		the scatter matrix $\Sigma$, and an absolutely continuous generating
		variate $\mathcal{R}$. Let $\bm X = \left(X_1, \ldots, X_n\right)$ be an
		i.i.d.\ sample from $X$ and suppose that the estimators $\hat\mu_n =
		\hat\mu\left(\bm X\right)$ and $\hat\Sigma_n = \hat\Sigma\left(\bm
		X\right)$ satisfy $\sqrt{n}\left(\hat\mu_n - \mu\right) =
		O_\mathbb{P}\left(1\right)$ and $\sqrt{n}\left(\hat\Sigma_n -
		\Sigma\right) = O_\mathbb{P}\left(1\right)$. Suppose further that
		$\hat\Sigma_n$ are symmetric positive definite and let $R_i$ and $\hat
		R_i$ be as in~\eqref{eq:ri}. Suppose $k=k_n$ satisfies
		Assumption~\ref{item:k}. Then 
			\begin{equation*}
			\max_{0\leq j\leq k}\left|\frac{\hat{\bm R}_{n-j,n}}{\bm R_{n-j,n}}
			- 1\right| = O_\mathbb{P}\left(\frac{1}{\sqrt{n}}\right).
		\end{equation*}
	\end{lemma}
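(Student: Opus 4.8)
The plan is to reduce the relative error $\left|\hat{\bm R}_{n-j,n}/\bm R_{n-j,n} - 1\right|$ to the absolute error $\left|\hat R_i - R_i\right|$ controlled in Lemma \ref{lemma:gvariate-rate}, and then to bound the denominator $\bm R_{n-j,n}$ from below uniformly over $0 \leq j \leq k$. First I would observe that, since the order statistics of the two samples need not be induced by the same permutation, one should work with a uniform bound over all indices: writing $\epsilon_n = \max_{1\leq i\leq n}\left|\hat R_i - R_i\right|$, one has $R_i - \epsilon_n \leq \hat R_i \leq R_i + \epsilon_n$ for every $i$, and monotonicity of order statistics then gives $\bm R_{n-j,n} - \epsilon_n \leq \hat{\bm R}_{n-j,n} \leq \bm R_{n-j,n} + \epsilon_n$ for every $j$. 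Hence
\begin{equation*}
	\max_{0\leq j\leq k}\left|\frac{\hat{\bm R}_{n-j,n}}{\bm R_{n-j,n}} - 1\right|
	\leq \frac{\epsilon_n}{\bm R_{n-k,n}},
\end{equation*}
using that $\bm R_{n-j,n} \geq \bm R_{n-k,n}$ for $0\leq j\leq k$.

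Next I would control the two factors. For the numerator, Lemma \ref{lemma:gvariate-rate} gives $\sqrt{n}\left(\hat R - R\right) = O_\mathbb{P}(1)$ for a single observation; the argument there is uniform in the sense that the $O_\mathbb{P}(1/\sqrt n)$ bounds on $\|\hat\Sigma_n^{-1} - \Sigma^{-1}\|$ and $\|\hat\mu_n - \mu\|$ do not depend on which $X_i$ is plugged in, while the factors $\|X_i - \hat\mu_n\|^2$ and $\|\hat\mu_n-\mu\|_\Sigma \cdot O_\mathbb P(1)$ involve $\max_{1\le i\le n}\|X_i\|$. So one gets $\epsilon_n = O_\mathbb{P}\!\left(\max_{1\leq i\leq n}\|X_i\|^2/\sqrt n\right)$, or more crudely $\epsilon_n = O_\mathbb P(\bm R_{n,n}^2 / \sqrt n)$ since $\|X_i - \mu\| \le \|\Sigma^{1/2}\| R_i$ up to constants. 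For the denominator, since $\mathcal{R}$ is supported on a neighbourhood of $U_\mathcal{R}(\infty) > 0$ and (by Assumption \ref{item:k}) $k/n\to 0$, the intermediate order statistic $\bm R_{n-k,n}$ converges in probability to $U_\mathcal{R}(\infty) > 0$ (or to $\infty$ if $\gamma\geq 0$); in either case $\bm R_{n-k,n}^{-1} = O_\mathbb P(1)$. Combining, $\epsilon_n/\bm R_{n-k,n} = O_\mathbb{P}(\bm R_{n,n}^2/(\sqrt n\, \bm R_{n-k,n}))$.

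The remaining issue is to see that this equals $O_\mathbb P(1/\sqrt n)$, i.e. that $\bm R_{n,n}^2/\bm R_{n-k,n}$ is tight. The cleanest route is to avoid the top order statistic altogether: redo the uniform version of Lemma \ref{lemma:gvariate-rate} keeping the factor $\|X_i-\mu\|^2$ attached, so that $\left|\hat R_i^2 - R_i^2\right| \le C_n(1 + R_i^2)$ with $C_n = O_\mathbb P(1/\sqrt n)$, whence $\left|\hat R_i/R_i - 1\right| \le \left|\hat R_i^2-R_i^2\right|/R_i^2 \le C_n(1 + R_i^{-2})$, and take the max over $0\le j\le k$, which replaces $R_i^{-2}$ by $\bm R_{n-k,n}^{-2} = O_\mathbb P(1)$. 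This gives the relative bound $O_\mathbb P(1/\sqrt n)$ directly, without ever dividing one order statistic by another. I expect this last bookkeeping — making the constants in the Cauchy–Schwarz estimates genuinely uniform over the sample and isolating the boundedness of $\bm R_{n-k,n}^{-1}$ — to be the only real subtlety; everything else is the monotonicity reduction plus citing Lemma \ref{lemma:gvariate-rate} and a standard intermediate–order–statistic fact.
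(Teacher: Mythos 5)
Your final route is essentially the paper's argument, so let me focus on what differs and where the write-up is loose.

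The paper obtains the inequality
\begin{equation*}
	\bigl|\hat{\bm R}_{n-j,n}^2 - \bm R_{n-j,n}^2\bigr| \le K_n\,\bm R_{n-j,n}^2,
	\qquad K_n = O_\mathbb{P}\bigl(n^{-1/2}\bigr),
\end{equation*}
by citing Lemma~2.2 of Heikkil\"a et al., and then uses exactly the algebra you use: writing $x_j=\hat{\bm R}_{n-j,n}/\bm R_{n-j,n}-1$, noting $x_j\ge -1$ so $|x_j|\le|x_j^2+2x_j|=|\hat{\bm R}_{n-j,n}^2-\bm R_{n-j,n}^2|/\bm R_{n-j,n}^2$. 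You re-derive a slightly weaker version of the cited inequality by hand, namely $|\hat R_i^2-R_i^2|\le C_n(1+R_i^2)$, which after division leaves the extra term $C_n\bm R_{n-k,n}^{-2}$ and hence needs the (true, but additional) observation that $\bm R_{n-k,n}^{-1}=O_\mathbb{P}(1)$ because $\bm R_{n-k,n}\stackrel{\mathbb{P}}{\to}U_{\mathcal{R}}(\infty)\in(0,\infty]$ under~\ref{item:k}. The paper's citation makes the bound purely multiplicative from the start, so it never has to discuss $\bm R_{n-k,n}^{-1}$; your version is self-contained but slightly longer.

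One step in your ``cleanest route'' is stated too quickly and would need to be repaired if written out. The bound $|\hat R_i/R_i-1|\le C_n(1+R_i^{-2})$ is indexed by the \emph{original} sample index $i$, whereas the target quantity $\hat{\bm R}_{n-j,n}/\bm R_{n-j,n}$ pairs the $j$-th top order statistics of two \emph{different} samples, which need not come from the same $i$. You cannot simply ``take the max over $0\le j\le k$'' of the index-$i$ bound. The correct fix is the monotonicity-of-order-statistics step you yourself used in your first (discarded) attempt: rewrite the per-index bound as the two-sided envelope $(1-C_n)R_i^2-C_n\le \hat R_i^2\le (1+C_n)R_i^2+C_n$, use monotonicity of order statistics under coordinatewise domination to transfer this to $(1-C_n)\bm R_{n-j,n}^2-C_n\le \hat{\bm R}_{n-j,n}^2\le (1+C_n)\bm R_{n-j,n}^2+C_n$, and only then divide by $\bm R_{n-j,n}^2$ and pass to $\bm R_{n-k,n}^{-2}$. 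With that step made explicit, your proof is correct and follows essentially the same route as the paper.
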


	\begin{proof}
		Let $j\in\{0, \ldots, k\}$. By~\cite[Lemma 2.2]{heikkila2019}, there
		exists a sequence of nonnegative random variables $K_n =
		O_\mathbb{P}\left(\frac{1}{\sqrt{n}}\right)$ such that $\left|\hat{\bm
		R}_{n-j,n}^2 - \bm R_{n-j,n}^2\right|\leq K_n \bm R_{n-j, n}^2$. This
		gives  $K_n \geq \left|\frac{\hat{ \bm R}_{n-j,n}^2 - \bm
		R_{n-j,n}^2}{\bm R_{n-j,n}^2}\right| = \left|x_j^2 + 2x_j\right|$, where
		we have used the notation $x_j = \frac{\hat{\bm R}_{n-j,n}}{\bm
		R_{n-j,n}} - 1$. Since now $x_j\in [-1,\infty)$ almost surely, we have
		that $\max_{0\leq j\leq k} \left|x_j\right| = \max_{0\leq j\leq
		k}\left|\frac{x_j^2 + 2x_j}{x_j+2}\right|\leq K_n \max_{0\leq j\leq
		k}\left|\frac{1}{x_j+2}\right|\leq K_n =
		O_\mathbb{P}\left(\frac{1}{\sqrt{n}}\right)$. This completes the proof.
	\end{proof}

	\begin{proof}[Proof of Theorem \ref{theorem:Q-hat-consistent}]
		Throughout the proof we denote $U = U_\mathcal{R}$ and $a =
		a_\mathcal{R}$. Recall also the notations $d_n = k/(np)$,
		$q_\gamma\left(t\right) = \int_1^t s^{\gamma - 1}\ln s \, \mathrm{d}s$,
		$r_p = U\left(1/p\right)$, and
		\begin{equation*}
			\hat Q_p = \left\{x\in\mathbb{R}^d :
			\left\|x - \hat\mu\right\|_{\hat\Sigma}
			\geq \hat x_p\left(\hat R\right)\right\}.
		\end{equation*}
		Since $\mathcal{R}$ satisfies \ref{item:density} and $p = p_n\to 0$, we
		have the representation (see Section~\ref{sec:elliptical-preliminaries})
		\begin{equation*}
			Q_p = \left\{x\in\mathbb{R}^d : \left\|x - \mu\right\|_\Sigma
			\geq r_p\right\}
		\end{equation*}
		for the quantile regions defined in~\eqref{eq:region}. By
		Lemma~\ref{lemma:error-behavior} we have that
		\begin{equation} \label{eq:elliptical-error}
			\sqrt{k_n}z_n = \sqrt{\frac{k}{n}} \frac{U\left(\frac{n}{k}\right)}
			{a\left(\frac{n}{k}\right)}\to 0.
		\end{equation}
		Now, using Lemma~\ref{lemma:gvariate-error}, we see that we can apply
		Theorem~\ref{theorem:moment} with $h_n = \frac{1}{\sqrt{n}}$, $\bm Y =
		\bm R$, and $\hat{\bm Y} = \hat{\bm R}$. Furthermore, since
		$\mathcal{R}$ satisfies~\ref{item:2erv}, we infer that
		\begin{equation} \label{eq:r-hat-normality}
			\sqrt{k}\left(\hat\gamma_M\left(\hat{\bm R}\right) - \gamma,
			\frac{\sigma_M\left(\hat{\bm R}\right)}{a\left(\frac{n}{k}\right)}
			- 1,
			\frac{\hat{\bm R}_{n-k,n}
				- U\left(\frac{n}{k}\right)}{a\left(\frac{n}{k}\right)}\right)
			\stackrel{\mathcal{D}}{\to} \left(\Gamma, \Lambda, B\right),
		\end{equation}
		where $\left(\Gamma, \Lambda, B\right)$ is jointly normal random vector
		with a known mean vector and a known covariance matrix (cf.
		Remark~\ref{remark:normality}). Since~\ref{item:p-usual-2}
		implies~\ref{item:p-usual-1}, these derivations lead to
		\begin{equation} \label{eq:q-hat-normality}
			\sqrt{k}\frac{\hat x_p\left(\hat{\bm R}\right) - r_p}
			{a\left(\frac{n}{k}\right)q_\gamma\left(d_n\right)}
			= O_\mathbb{P}\left(1\right).
		\end{equation}

	 	Let 
		\begin{equation*}
			\tilde Q_p = \left\{x\in\mathbb{R}^d : \left\|x - \mu\right\|
			_\Sigma \geq \max\left\{r_p, \tilde r_p\right\}\right\},
		\end{equation*}
		where $\tilde r_p = \left\|\hat\mu\left(\bm X\right) -
		\mu\right\|_\Sigma + \hat x_p\left(\hat{\bm
		R}\right)\left(\left\|\hat\Sigma\left(\bm
		X\right)\right\|\left\|\Sigma^{-1} - \hat\Sigma^{-1}\left(\bm
		X\right)\right\| + 1\right)$. Recall that $\mathbb{P}\left(X\in
		A\triangle B\right)$ is a pseudometric. By~\cite[Lemma 2]{pere2024a} we
		have $\tilde Q_p\subset \hat Q_p\cap Q_p$, and thus,
		\begin{equation*}
			\begin{split}
				\frac{\mathbb{P}\left(X\in\hat Q_p\triangle Q_p\right)}{p}
				&\leq \frac{\mathbb{P}\left(X\in \hat Q_p\triangle \tilde Q_p
				\right)}{p}
				+ \frac{\mathbb{P}\left(X\in\tilde Q_p\triangle Q_p\right)}
				{p} \\
				&= \left(\frac{\mathbb{P}\left(X\in\hat Q_p\right)}{p}
				- \frac{\mathbb{P}\left(X\in\tilde Q_p\right)}{p}\right) +
				\left(\frac{\mathbb{P}\left(X\in Q_p\right)}{p}
				- \frac{\mathbb{P}\left(X\in\tilde Q_p\right)}{p}\right) \\
				&= 1 + \frac{\mathbb{P}\left(X\in\hat Q_p\right)}{p}
				- 2\frac{\mathbb{P}\left(X\in\tilde Q_p\right)}{p}.
			\end{split}
		\end{equation*}
		Thus it suffices to prove that, as $n\to\infty$,
		\begin{align}
			\frac{\mathbb{P}\left(X\in\hat Q_p\right)}{p}
			&\stackrel{\mathbb{P}}{\to}1 \quad\textnormal{and}
			\label{eq:qhat} \\
			\frac{\mathbb{P}\left(X\in\tilde Q_p\right)}{p}
			&\stackrel{\mathbb{P}}{\to}1. \label{eq:qtilde}
		\end{align}

		We begin by showing~\eqref{eq:qhat}. We have 
		\begin{equation*}
			\frac{\mathbb{P}\left(X\in\hat Q_p\right)}{p}
			= \frac{\mathbb{P}\left(\mathcal{R} > \hat x_p\left(\hat
			R\right)+ \left(R - \hat R\right) \right)}{p},
		\end{equation*}
		where
		\begin{equation*}
			\hat x_p\left(\hat R\right)+ \left(R - \hat R\right)
			= \underbrace{\hat{\bm R}_{n-k,n} + \left(R - \hat R\right)}
			_{\hat b_n}
			+ \hat \sigma_M\left(\hat{\bm R}\right) \frac{\left(\frac{k}{np}
				\right)^{\hat\gamma_M\left(\hat{\bm R}\right)} - 1}
			{\hat \gamma_M\left(\hat{\bm R}\right)}.
		\end{equation*}
		By~\cite[Theorem A.1.]{einmahl2009} we have
		$\frac{\mathbb{P}\left(\mathcal{R} > \hat x_p\left(\hat R\right)+
		\left(R - \hat R\right) \right)}{p}\stackrel{\mathbb{P}}{\to} 1$,
		provided that the quantities $\sqrt{k}\left(\frac{\hat b_n -
		U\left(\frac{n}{k}\right)}{a\left(\frac{n}{k}\right)}\right)$,
		$\sqrt{k}\left(\hat\gamma_M\left(\hat{\bm R}\right) - \gamma\right)$ and
		$\sqrt{k}\left(\frac{\hat\sigma_M\left(\hat{\bm
		R}\right)}{a\left(\frac{n}{k}\right)} - 1\right)$ are bounded in
		probability. Since the latter two follow directly
		from~\eqref{eq:r-hat-normality}, it suffices to show
		$\sqrt{k}\left(\frac{\hat b_n -
		U\left(\frac{n}{k}\right)}{a\left(\frac{n}{k}\right)}\right)
		=O_\mathbb{P}\left(1\right)$. For this, note that since, again
		by~\eqref{eq:r-hat-normality}, $\sqrt{k}\left(\frac{\hat{\bm R}_{n-k,n}
		- U\left(\frac{n}{k}\right)}{a\left(\frac{n}{k}\right)}\right)
		=O_\mathbb{P}\left(1\right)$, it remains to show that
		$\frac{\sqrt{k}\left(\hat R - R\right)}{a\left(\frac{n}{k}\right)} =
		O_\mathbb{P}\left(1\right)$. Now, Lemma~\ref{lemma:gvariate-rate},
		together with \eqref{eq:elliptical-error}, implies
		\begin{equation*}
			\frac{\sqrt{k}\left(\hat R - R\right)}{a\left(\frac{n}{k}\right)}
			= O_\mathbb{P}\left(\sqrt{\frac{k}{n}}
			\frac{U\left(\frac{n}{k}\right)}
			{a\left(\frac{n}{k}\right)}\right)
			\frac{1}{U\left(\frac{n}{k}\right)}
			\stackrel{\mathbb{P}}{\to} 0.
		\end{equation*}
		Thus we have obtained~\eqref{eq:qhat}. 
		
		We next show~\eqref{eq:qtilde}. Using $\min\left\{x, y\right\} =
		\frac{x+y-\left|x - y\right|}{2}$ gives 
		\begin{equation*}
			\begin{split}
				\frac{\mathbb{P}\left(X\in\hat Q_p\right)}{p}
				&= \frac{\mathbb{P}\left(\mathcal{R} > \max\left\{r_p,
				\tilde r_p\right\}\right)}{p}
				= \frac{\min\left\{\mathbb{P}\left(\mathcal{R} > r_p\right),
				\mathbb{P}\left(\mathcal{R} > \tilde r_p\right)\right\}}{p} \\
				&= \frac{1}{2}\left(1 + \frac{\mathbb{P}\left(\mathcal{R}
				> \tilde r_p\right)}{p}\right)
				- \frac{1}{2}\left|1 - \frac{\mathbb{P}\left(\mathcal{R}
				> \tilde r_p\right)}{p}\right|,
			\end{split}
		\end{equation*}
		and thus, it suffices to show $\frac{\mathbb{P}\left(\mathcal{R} >
		\tilde r_p\right)}{p} \stackrel{\mathbb{P}}{\to} 1$. We write 
		\begin{equation*}
			\tilde r_p =
			\underbrace{\left\|\hat\mu\left(\bm X\right) - \mu\right\|_\Sigma
			+ \left\|\hat\Sigma\left(\bm X\right)\right\|\left\|\Sigma^{-1}
			- \hat\Sigma\left(\bm X\right)^{-1}\right\|
			\hat x_p\left(\hat{\bm R}\right)}_{\tilde b_n}
			+ \hat{\bm R}_{n-k,n}
			+ \hat\sigma_M\left(\hat{\bm R}\right)
			\frac{\left(\frac{k}{np}\right)
			^{\hat\gamma_M\left(\hat{\bm R}\right)} - 1}
			{\hat \gamma_M\left(\hat{\bm R}\right)}.
		\end{equation*}
		Now, similarly as in the proof of~\eqref{eq:qhat},
		$\frac{\mathbb{P}\left(\mathcal{R} > \tilde r_p\right)}{p}
		\stackrel{\mathbb{P}}{\to} 1$ follows from~\eqref{eq:r-hat-normality}
		and~\cite[Theorem A.1.]{einmahl2009} provided that $\frac{\sqrt{k}\tilde
		b_n}{a\left(\frac{n}{k}\right)} = O_\mathbb{P}\left(1\right)$.
		Using~\ref{item:loc-scat} and~\eqref{eq:q-hat-normality} gives
		\begin{equation*}
			\begin{split}
				\frac{\sqrt{k}\tilde b_n}{a\left(\frac{n}{k}\right)}
				&= O_\mathbb{P}\left(\frac{\sqrt{k}}
				{\sqrt{n}a\left(\frac{n}{k}\right)}\right)
				+ O_\mathbb{P}\left(\frac{\sqrt{k}}{\sqrt{n}
					a\left(\frac{n}{k}\right)}\right)
				\left(\left(\hat x_p\left(\hat{\bm R}\right)
				- r_p\right)+ r_p\right) \\
				&= O_\mathbb{P}\left(\frac{\sqrt{k}}{\sqrt{n}
					a\left(\frac{n}{k}\right)}\right)
				+ O_\mathbb{P}\left(\frac{
					q_\gamma\left(d_n\right)}{\sqrt{n}}\right)
				+ O_\mathbb{P}\left(\frac{\sqrt{k}r_p}
				{\sqrt{n}a\left(\frac{n}{k}\right)}\right).
			\end{split}
		\end{equation*}
		Since $U\left(\infty\right) \in (0, \infty]$,
		Equation~\eqref{eq:elliptical-error} gives
		$\frac{\sqrt{k}}{\sqrt{n}a\left(\frac{n}{k}\right)} =
		O\left(\sqrt{k}z_n\right) = o\left(1\right)$ for the first term.
		Moreover, in the case $\gamma \leq 0$, we have $d_n^\gamma =
		O\left(1\right)$, and in the case $\gamma>0$, \ref{item:p-slow} gives
		$\sqrt{\frac{k}{n}}d_n^\gamma = O\left(1\right)$. Thus,
		by~\ref{item:p-usual-2}, this leads to  $\frac{
		q_\gamma\left(d_n\right)}{\sqrt{n}} =o(1)$. We consider the last term
		$\frac{\sqrt{k}r_p} {\sqrt{n}a\left(\frac{n}{k}\right)}$ separately for
		$-1/2 < \gamma < 0$, $\gamma = 0$ and $\gamma > 0$. For $-1/2 < \gamma <
		0$, by using $U\left(\infty\right)\in(0, \infty)$
		and~\eqref{eq:elliptical-error}, we obtain $\frac{\sqrt{k}r_p}
		{\sqrt{n}a\left(\frac{n}{k}\right)}=
		O\left(\sqrt{\frac{k}{n}}\frac{1}{a\left(n/k\right)}\right) =
		O\left(\sqrt{k} z_n\right) = o\left(1\right)$. For $\gamma>0$,
		Lemma~\ref{lemma:dehaan} gives $U\left(t\right) / a\left(t\right) =
		O\left(1\right)$, and Lemma~\ref{lemma:neves} and~\ref{item:p-slow} now
		provide
		\begin{equation*}
			\frac{\sqrt{k}r_p}
			{\sqrt{n}a\left(\frac{n}{k}\right)} = \sqrt{\frac{k}{n}}
			\frac{U\left(1/p\right)}{a\left(1/p\right)}
			\frac{a\left(1/p\right)}{\left(1/p\right)^\gamma}
			\frac{\left(n/k\right)^\gamma}{a\left(n/k\right)}
			\left(\frac{k}{np}\right)^\gamma
			= O\left(\sqrt{\frac{k}{n}} d_n^\gamma\right) = O(1).
		\end{equation*}
		Finally, for $\gamma=0$, we write
		\begin{equation*}
			r_p = U\left(\frac{1}{p}\right) = a\left(\frac{n}{k}\right)
			\left(\frac{U\left(\frac{n}{k}d_n\right)
			- U\left(\frac{n}{k}\right)}{a\left(\frac{n}{k}\right)}
			\frac{1}{\ln d_n}\ln d_n
			+ \frac{U\left(\frac{n}{k}\right)}{a\left(\frac{n}{k}\right)}
			\right).
		\end{equation*}
		By~\cite[Lemma 4.3.5]{dehaan2007} we have
		$\frac{U\left(\frac{n}{k}d_n\right) -
		U\left(\frac{n}{k}\right)}{a\left(\frac{n}{k}\right)} \frac{1}{\ln d_n}
		\to 1$, and consequently, $r_p = O\left(a\left(\frac{n}{k}\right) \ln
		d_n\right) + U\left(\frac{n}{k}\right)$. This gives
		\begin{equation*}
			\frac{\sqrt{k}r_p}{\sqrt{n}a\left(\frac{n}{k}\right)}
			= O\left(\sqrt{\frac{k}{n}}\ln d_n\right)
			+ \sqrt{k}z_n.
		\end{equation*}
		Hence \eqref{eq:qtilde} follows from~\ref{item:p-slow}
		and~\eqref{eq:elliptical-error}. This completes the whole proof.  
	\end{proof}
\end{appendices}
\end{document}